     \newcommand{\BF}{{\mathbb {F}}}
     \newcommand{\BP}{{\mathbb {P}}}
    \newcommand{\BQ}{{\mathbb {Q}}} \newcommand{\BR}{{\mathbb {R}}}
     \newcommand{\BZ}{{\mathbb {Z}}}
     \newcommand{\CH}{{\mathcal {H}}}
     \newcommand{\fF}{{\mathfrak{F}}}
     \newcommand{\fH}{{\mathfrak{H}}}
     \newcommand{\fL}{{\mathfrak{L}}}
    \newcommand{\fQ}{{\mathfrak{Q}}}
    \newcommand{\Gal}{{\mathrm{Gal}}}
    \newcommand{\ord}{{\mathrm{ord}}}
    \renewcommand{\mod}{\ \mathrm{mod}\ }
    \newcommand{\tor}{{\mathrm{tor}}}
    \newcommand*\xbar[1]{%
   \hbox{%
     \vbox{%
       \hrule height 0.5pt 
       \kern0.3ex
       \hbox{%
         \kern -0.05em
         \ensuremath{#1}%
         \kern -0.05em
       }%
     }%
   }%
}
\DeclareFontFamily{U}{wncy}{}
\DeclareFontShape{U}{wncy}{m}{n}{<->wncyr10}{}
\DeclareSymbolFont{mcy}{U}{wncy}{m}{n}
\DeclareMathSymbol{\Sha}{\mathord}{mcy}{"58}
    \theoremstyle{plain}
    \newtheorem{thm}{Theorem}[section] \newtheorem{cor}[thm]{Corollary}
    \newtheorem{lem}[thm]{Lemma}  \newtheorem{prop}[thm]{Proposition}
    \newtheorem {conj}[thm]{Conjecture}
\theoremstyle{remark} \newtheorem{remark}[thm]{Remark}
\theoremstyle{remark} 
\theoremstyle{remark} 
    \numberwithin{equation}{section}
\begin{document}

\title{The Birch--Swinnerton-Dyer exact formula for quadratic twists of elliptic curves}

\author{Shuai Zhai}

\dedicatory{Dedicated to the memory of John Coates}

\thanks{This work was supported by the Natural Science Foundation of Shandong Province (Grant No. 2022HWYQ-037) and by the Taishan Young Scholar Program (Grant No. tsqn202211043).}


\subjclass[2020]{11F67, 11G05, 11G40.}

\begin{abstract}
In the present paper, we obtain a general lower bound for the $2$-adic valuation of the algebraic part of the central value of the complex $L$-series for the quadratic twists of any elliptic curve over $\BQ$, showing that when the $2$-part of the product of Tamagawa factors grows, the $2$-part of the algebraic central $L$-value grows as well, in accordance with the Birch--Swinnerton-Dyer exact formula. This generalises a result of Coates--Kim--Liang--Zhao to all elliptic curves defined over $\BQ$. We also prove the existence of an explicit infinite family of quadratic twists with analytic rank $0$ for a large family of elliptic curves.
\end{abstract}

\maketitle


\section{Introduction}

Let $E$ be any elliptic curve defined over $\BQ$. We take any global minimal Weierstrass equation for $E$, and write $\Delta_E$ for its discriminant, $\omega_E$ for its N\'{e}ron differential, and $\Omega_E^+$ for the least positive real period of $\omega_E$. Let $L(E, s)$ be the complex $L$-series of $E$. Since $E$ is modular by Wiles' theorem, $L(E, s)$ has an entire analytic continuation, and $L(E, 1)/\Omega_E^+$ is a rational number. Define $c_\infty(E)=\delta_E\Omega_E^+$, where $\delta_E=1$ or $2$ is the number of connected components of $E(\BR)$. For each finite prime $\ell$, let $c_\ell=[E(\BQ_\ell): E_0(\BQ_\ell)]$, where $E_0(\BQ_\ell)$ is the subgroup of $E(\BQ_\ell)$ consisting of points with non-singular reduction modulo $\ell$. Let $\Sha(E)$ denote the Tate--Shafarevich group of $E$. If $L(E,1) \neq 0$, the celebrated results of Gross--Zagier \cite{Gross} and Kolyvagin \cite{Kolyvagin} tell us that both $E(\BQ)$ and $\Sha(E)$ are finite. In addition, the conjecture of Birch and Swinnerton-Dyer predicts that 
\begin{equation}\label{bsd}
\frac{L(E,1)}{c_\infty(E)}=\frac{\prod_\ell c_\ell (E)\cdot |\Sha(E)|}{|E(\BQ)_\mathrm{tor}|^2}.
\end{equation}
Remarkable progress has been made towards the proof of this exact formula by the methods of Iwasawa theory. In particular, the $p$-part of the Birch--Swinnerton-Dyer exact formula for analytic rank 0 and 1 is almost known to hold for all primes $p>2$ due to lots of authors, for example, Rubin \cite{Rubin}, Kato \cite{Kato}, Zhang \cite{Zhang}, Skinner--Urban \cite{Skinner2}, Kobayashi \cite{Kobayashi}, Jetchev--Skinner--Wan \cite{Jetchev}, Wan \cite{Wan}, Castella \cite{Castella1}, et al (see the nice survey article by Coates \cite{Coates2}). Recently, strong $p$-converse theorems of the Birch--Swinnerton-Dyer conjecture have been established by Skinner \cite{Skinner1}, Burungale--Tian \cite{Burungale1} and Castella--Grossi--Lee--Skinner \cite{Castella2}, et al. However, many of the most interesting classical problems involve the small primes $p$ where this formula still has not been established in general, and notably for the prime $p=2$: see, for example, the remarkable work of Tian \cite{Tian1}\cite{Tian2} and Tian--Yuan--Zhang \cite{Tian3} on the congruent number problem, and Smith \cite{Smith} and Kriz--Li \cite{Kriz} on Goldfeld's conjecture \cite{Goldfeld}. Very recently, Burungale--Flach \cite{Burungale2} proved the validity of \eqref{bsd} for all elliptic curves with complex multiplication, and furthermore, gave a complete proof of its equivariant refinement formulated by Gross. In the present paper, we use elementary methods, which involve no Iwasawa theory, to prove both some lower bound results of the 2-part of \eqref{bsd}, and also some special cases of the 2-part of \eqref{bsd}, when $E$ runs over a large family of quadratic twists of some fixed elliptic curve defined over $\BQ$.

We write $ord_2$ for the $2$-adic valuation on $\BQ$, normalised so that $ord_2(2) = 1$ and $ord_2(0) = \infty$. In all that follows, $M$ will always denote a square free positive or negative integer such that $M\equiv 1 \mod 4$ with $M \neq 1$, and we shall write $E^{(M)}$ for the twist of $E$ by the extension $\BQ(\sqrt{M})/\BQ$. For any odd prime $q \mid M$, we define 
$$
t_E(M):= \sum_{q \mid M} t(q), \text{ where } t(q) = 
\left\{
\begin{array}{ll}
1            & \hbox{if $2 \mid c_q(E^{(M)})$;} \\
0            & \hbox{if $2 \nmid c_q(E^{(M)})$.}
\end{array}
\right.
$$
We recall that $E$ is said to be optimal if the map from the modular curve $X_0(C)$ to $E$ does not factor through any other elliptic curve defined over $\BQ$. We now state the main result of our paper.

\begin{thm}\label{ThmLowerBound}
Let $E$ be any optimal elliptic curve over $\BQ$ having conductor $C$. Then, for all square free integers $M \equiv1 \mod 4$ with $(M, C) = 1$ such that $L(E^{(M)}, 1) \neq 0$, we have
\begin{equation}\label{lower}
ord_2(L(E^{(M)},1)/c_\infty(E^{(M)})) \geq t_E(M)-1-ord_2(\nu_E), 
\end{equation}
where $\nu_E$ is the Manin constant of $E$.
\end{thm}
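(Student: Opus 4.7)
The plan is to translate the statement into a $2$-adic divisibility of an integer-valued sum of modular symbols for the newform $f_E$, and then to extract one factor of $2$ for each prime $q\mid M$ with $2\mid c_q(E^{(M)})$.

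Because $E$ is optimal, the modular parametrization $\phi\colon X_0(C)\to E$ satisfies $\phi^*\omega_E=\nu_E\cdot 2\pi i f_E(z)\,dz$, so the normalized plus modular symbol
$$[r]^+\;:=\;\frac{\nu_E}{\Omega_E^+}\Re\!\left(-2\pi i\int_r^{i\infty}f_E(z)\,dz\right),\qquad r\in\BQ,$$
is $\BZ$-valued. Taking $M>0$ for definiteness (the case $M<0$ is analogous using $\Omega_E^-$), the classical modular-symbol formula for a quadratic twist gives
$$\frac{\nu_E\sqrt{M}\,L(E^{(M)},1)}{\Omega_E^+}\;=\;\sum_{a\bmod M}\chi_M(a)\,[a/M]^+\;=:\;S_E(M)\;\in\;\BZ.$$
Since $M\equiv 1\bmod 4$ is odd and coprime to $C$, the twisted Weierstrass model $y^2=x^3+M^2Ax+M^3B$ is globally minimal, whence $\Omega_{E^{(M)}}^+=\Omega_E^+/\sqrt{M}$ and
$$\frac{L(E^{(M)},1)}{c_\infty(E^{(M)})}\;=\;\frac{S_E(M)}{\nu_E\,\delta_{E^{(M)}}}.$$
As $ord_2(\delta_{E^{(M)}})\le 1$ absorbs the $-1$ in the stated bound, the theorem reduces to $ord_2(S_E(M))\ge t_E(M)$.

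I would then analyze $c_q(E^{(M)})$ for each odd prime $q\mid M$. Since $q\nmid C$ but $q$ ramifies in $\BQ(\sqrt{M})/\BQ$, the twist $E^{(M)}$ acquires Kodaira reduction type $I_0^*$ at $q$; the component group is $(\BZ/2)^2$, so $c_q(E^{(M)})\in\{1,2,4\}$ counts $\BF_q$-rational components. In particular $2\mid c_q(E^{(M)})$ iff the defining cubic of $E$ has a root in $\BF_q$, iff $E[2](\BF_q)\ne 0$, iff the Hecke eigenvalue $a_q(f_E)$ is even. Using CRT to decompose $a\in(\BZ/M)^\times$ as $a\equiv v\pmod q$, $a\equiv u\pmod{M/q}$, the sum factors as
$$S_E(M)\;=\;\epsilon\sum_{u\in(\BZ/(M/q))^\times}\chi_{M/q}(u)\sum_{v\in(\BZ/q)^\times}\chi_q(v)\,[r(u,v)/M]^+$$
for a global sign $\epsilon$, where $r(u,v)=a/M$ with $a$ the CRT lift. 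The key mod-$2$ claim is that, for each such $q$, the symbol $[r(u,v)/M]^+\bmod 2$ is independent of $v$; by $\sum_v\chi_q(v)=0$ the inner sum vanishes modulo $2$, producing a factor of $2$. Iterating this decomposition over all primes $q\mid M$ with $2\mid c_q(E^{(M)})$ and tracking the $2$-adic valuations carefully should yield $2^{t_E(M)}\mid S_E(M)$.

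The crucial input, and the main obstacle, is the mod-$2$ invariance $[r(u,v)/M]^+\equiv[r(u,v')/M]^+\pmod 2$ for $v,v'\in(\BZ/q)^\times$ under the hypothesis $a_q(f_E)\equiv 0\pmod 2$. I would prove this by a direct computation on the Manin presentation: the existence of an $\BF_q$-rational $2$-torsion point on $E$ translates into the $2$-isogeny collapsing the $q$ Manin symbols above a given cusp of level $M/q$ into a single class modulo $2$. An alternative formulation comes via the Mazur--Stevens theory of congruences of modular symbols: the condition $a_q\equiv 0\pmod 2$ places $f_E$ in an Eisenstein-like part of the mod-$2$ Hecke algebra at $q$, which kills the $(\BZ/q)^\times$-dependence of the plus symbol modulo $2$. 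Once this mod-$2$ invariance is in hand, combining it with the CRT factorization and the character orthogonality relations at each prime $q$ with $2\mid c_q(E^{(M)})$ gives $ord_2(S_E(M))\ge t_E(M)$ and hence the theorem.
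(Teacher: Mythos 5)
There is a genuine gap, and you have put your finger on it yourself: the ``mod-$2$ invariance'' claim --- that for $q\mid M$ with $a_q$ even the value $[a/M]^+\bmod 2$ is independent of the residue of $a$ modulo $q$ --- is exactly the unproven crux, and it is both stronger than anything the paper needs and almost certainly false in this generality. Pointwise invariance in the $q$-component would force $2$-divisibility of the algebraic parts of $L(E,\chi,1)$ for \emph{every} character $\chi$ of conductor divisible by $q$ (cubic, sextic, etc.), not just the quadratic ones; neither of your two suggested routes (a ``collapse of Manin symbols'' under the $2$-isogeny, or an Eisenstein-type congruence in the mod-$2$ Hecke algebra) is an argument, and no such pointwise statement is used in the paper. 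What is actually available, and what the paper exploits, are identities about the \emph{full character sums}: the Hecke relation $\langle m\rangle_{\chi_d}=\prod_{q\mid \frac{m}{d}}(a_q-2\chi_d(q))\,\langle d\rangle_{\chi_d}$ (so the factor $a_q-2\chi_d(q)$, which is even exactly when $N_q$ is even, supplies the divisibility), the identity $\sum_{d\mid m}\langle m\rangle_{\chi_d}=2^{r(m)}\sum^{*}_{k}\langle\{0,k/m\},f\rangle$ giving Proposition \ref{Integrality}, and the relation \eqref{NqL(E,1)} expressing $N_{q_1}\cdots N_{q_r}L(E,1)$ in terms of the imprimitive symbols, which yields Lemma \ref{ord_2 S'_m'1}.

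Even if one granted your invariance claim at each prime separately, the iteration does not deliver $2^{t_E(M)}$: invariance modulo $2$ in each variable of the CRT decomposition only shows the total sum is divisible by $2$, not by $2^{t}$ --- to gain a factor of $2$ per prime you need control modulo successively higher powers of $2$, which is precisely where the real work lies. The paper supplies this by induction on the number of prime factors, with induction hypothesis $ord_2(\langle d\rangle_{\chi_d}/c^{\pm}_f)\geq t(d)-1$ for proper divisors $d$ of $m$, fed into the two displayed identities of Proposition \ref{Integrality}, with Lemma \ref{ord_2 S'_m'1} handling the imprimitive term $\langle m\rangle_{\chi^0}$; your phrase ``tracking the $2$-adic valuations carefully'' is exactly this missing induction. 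Two smaller inaccuracies: when $\Delta_E<0$ the real parts of periods lie in $\tfrac12\Omega_f^{+}\BZ$, not $\Omega_f^{+}\BZ$ (cf.\ \eqref{k/mf}), so $S_E(M)$ is only half-integral as you have normalized it; and the ``$-1$'' in \eqref{lower} is structural (it comes from the $2^{r(m)-1}$ in Proposition \ref{Integrality} and this half-integrality), not from $\delta_{E^{(M)}}$, while the case $M<0$ requires the imaginary period $c_f^{-}$ and the same-parity statement of Proposition \ref{Integrality} rather than being ``analogous'' for free.
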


Note that, if we assume $\nu_E$ is odd, the theorem gives the lower bound $t_E(M)-1$. In fact, it has been shown that $\nu_E$ is odd for $4 \nmid C$ by the work of Mazur \cite{Mazur}, Abbes--Ullmo \cite{Abbes}, Agashe--Ribet--Stein \cite{Agashe} and \v{C}esnavi\v{c}ius \cite{Cesnavicius}. In addition, Cremona \cite{Cremona2} has shown numerically that $\nu_E$ is $1$ for $C \leq 60000$. Previously, Zhao \cite{Zhao1}\cite{Zhao2}\cite{Zhao3} proved several results like \eqref{lower} for the congruent number family of elliptic curves, which were subsequently used by Tian in his induction arguments on the congruent number problem. Coates--Kim--Liang--Zhao \cite{Coates3} proved an analogue of \eqref{lower} for a wide class of elliptic curves with complex multiplication, which was then applied in \cite{Coates4} to prove a generalisation of Birch's lemma, and the $2$-part of the Birch--Swinnerton-Dyer conjecture for quadratic twists of $X_0(49)$. Kezuka \cite{Kezuka1}\cite{Kezuka2}\cite{Kezuka3} and Kezuka--Li \cite{Kezuka4} generalised Zhao's method from $2$-adic to $3$-adic and proved analogous results for the family of cubic twists of the modular curve $X_0(27)$. These earlier methods work only for certain families of elliptic curves with complex multiplication, since they make use of the fact that the value at $s=1$ of the complex $L$-series of such a curve is a sum of Eisenstein series. 

Previously, we have successfully applied the modular symbols on the 2-part of the central $L$-values, but the early method was only valid for certain quadratic twists of certain elliptic curves \cite{Cai}\cite{Zhai2}. One main reason is that, for all elliptic curves, both the real and imaginary periods are getting involved, and the earlier integrality argument therefore failed. In order to apply it on all elliptic curves, we construct two integrality arguments separately, and successfully apply them at the same time in a complete induction argument. Another reason is that, as the given elliptic curve varies, the $2$-part of the $L$-value also varies, making it difficult to control the initial $L$-value in the induction argument. However, we could obtain a uniform lower bound, which turns out to be enough to work on all elliptic curves when combined with our new integrality arguments. Thus, the new techniques used in this paper are valid for all quadratic twists of any elliptic curve over $\BQ$. 

Note that Theorem \ref{MainThm-1} tells us that the lower bound appeared in Theorem \ref{ThmLowerBound} is generally the best, since the equality holds for the family of elliptic curves in Theorem \ref{MainThm-1}. Since there are only finitely number of bad primes of $E$, we immediately have the following result.
\begin{cor}\label{CorLowerBound}
Let $E$ be any elliptic curve over $\BQ$ having conductor $C$. Then, for all square free integers $M'$ such that $L(E^{(M')}, 1) \neq 0$, we have
\begin{equation}\label{lower2}
ord_2(L(E^{(M')},1)/c_\infty(E^{(M')})) \geq t_E(\epsilon M')-T_{\epsilon C}-ord_2(\nu_E), 
\end{equation}
where $\epsilon=\pm 1, 1/2$ such that $\epsilon M' \equiv 1 \mod 4$, and $T_{\epsilon C}$ is an integer only related to $\epsilon C$.
\end{cor}

We now give some non-vanishing results for quadratic twists of certain elliptic curves defined over $\BQ$. The following conjecture is folklore.
\begin{conj}\label{Conj}
For any elliptic curve $E$ over $\BQ$, and any positive integer $r$, there are infinitely many square-free integers $M$, having exactly $r$ prime factors, such that $L(E^{(M)}, 1) \neq 0.$ 
\end{conj}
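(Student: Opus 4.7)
The plan is to prove Conjecture~\ref{Conj} by induction on $r$, combining Theorem~\ref{ThmLowerBound} with an external analytic input certifying non-vanishing. For the base case $r=1$, I would invoke the classical non-vanishing results of Bump--Friedberg--Hoffstein and Murty--Murty in the imaginary case, and of Iwaniec in the real case, which together supply infinitely many prime quadratic twists $M = q$ with $L(E^{(q)},1)\neq 0$.

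For the inductive step, suppose a square-free $M_0 \equiv 1 \mod 4$ with $(M_0,C)=1$ and exactly $r-1$ prime factors has already been produced with $L(E^{(M_0)},1)\neq 0$. I would then look for primes $q\nmid M_0 C$ satisfying $L(E^{(M_0 q)},1)\neq 0$. First, I would restrict $q$ by congruence conditions modulo a small integer depending only on $E$, chosen so that the Tamagawa factor $c_q(E^{(M_0 q)})$ is forced to be even; this makes $t(q)=1$ and hence $t_E(M_0 q) = t_E(M_0)+1$. For such $q$ that do give a non-vanishing central value, Theorem~\ref{ThmLowerBound} then yields the lower bound
$$ ord_2\bigl(L(E^{(M_0q)},1)/c_\infty(E^{(M_0q)})\bigr)\geq t_E(M_0)-ord_2(\nu_E). $$
To convert this conditional lower bound into actual non-vanishing, I would use Waldspurger's formula to express the normalised central value as (up to an explicit rational square) the square of a Fourier coefficient $a(|M_0 q|)$ of a weight-$3/2$ cusp form attached to $E$, and then appeal to a mod-$2$ non-vanishing theorem for these Fourier coefficients in the spirit of Ono--Skinner. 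A positive density of primes $q$ in the prescribed congruence class should satisfy $a(|M_0 q|)\not\equiv 0 \mod 2$, producing infinitely many admissible $q$ and closing the induction.

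The main obstacle is precisely the last step. Theorem~\ref{ThmLowerBound} is by nature a \emph{lower} bound on a $2$-adic valuation, which is trivially compatible with $L(E^{(M)},1)=0$ (corresponding to valuation $+\infty$), so an independent analytic input is indispensable. The natural input---mod-$2$ non-vanishing of the coefficients of a weight-$3/2$ form along a varying prime parameter $q$---is only well understood when the form lies in Kohnen's plus space and the residual mod-$2$ Galois representation of $E$ is suitably non-degenerate. For a general elliptic curve $E/\BQ$ this non-vanishing is at present out of reach, which is presumably why the conjecture remains open; however, for the large family of curves treated elsewhere in the paper (for instance, those with a convenient $2$-isogeny structure and an accessible initial seed $M_0$), all the mod-$2$ non-vanishing inputs can be checked explicitly and the strategy above goes through unconditionally, yielding the explicit infinite family of rank-$0$ twists promised in the abstract.
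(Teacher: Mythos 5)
The statement you are addressing is stated in the paper as a \emph{conjecture} (``the following conjecture is folklore''), and the paper does not prove it in general; it only verifies special cases (Theorem \ref{ThmNonvanishing}) for optimal curves with $E(\BQ)[2]\leq \BZ/2\BZ$ satisfying $ord_2(L(E,1)/c_\infty(E))=-ord_2(|E(\BQ)[2]|\cdot\nu_E)$, and with the prime factors of $M$ restricted to the set $\mathcal{S}$. Your proposal is likewise not a proof, and you concede this yourself: the decisive step --- mod-$2$ non-vanishing of the Fourier coefficients $a(|M_0q|)$ of the weight-$3/2$ form as $q$ ranges over primes in a prescribed class --- is an open analytic input for general $E$, so the induction never closes. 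Two further points in your set-up are shakier than you indicate. First, Theorem \ref{ThmLowerBound} genuinely cannot contribute to non-vanishing (as you note, valuation $+\infty$ satisfies any lower bound), so in your scheme it is doing no work at all; the entire burden falls on the unproven Waldspurger/Ono--Skinner step. Second, the condition $2\mid c_q(E^{(M_0q)})$ is equivalent to $2\mid N_q=|E(\BF_q)|$, which is governed by the splitting of $q$ in $\BQ(E[2])$; when $\Gal(\BQ(E[2])/\BQ)\cong S_3$ this is a Chebotarev condition on a non-abelian extension and cannot be imposed by congruence conditions on $q$ modulo a fixed integer, contrary to what you assert.

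It is worth contrasting your strategy with how the paper actually obtains its partial results. Rather than importing an external non-vanishing theorem, the paper makes the $2$-adic valuation computation itself produce non-vanishing: under the hypotheses of Theorem \ref{MainThm-1} (negative discriminant, $E(\BQ)[2]\cong\BZ/2\BZ$, odd Manin constant, $ord_2(L(E,1)/c_\infty(E))=-1$) and for primes $q_i\in\mathcal{S}$ (so that $ord_2(N_{q_i})=1$), an induction on $r(M)$ using the integrality statement for modular symbols (Proposition \ref{Integrality}) together with the strict inequality of Lemma \ref{ord_2 S'_m'} pins down the valuation \emph{exactly}: $ord_2(\langle m\rangle_{\chi_m}/c^{\pm}_f)=r-1$, hence $ord_2(L(E^{(M)},1)/c_\infty(E^{(M)}))=r-1<\infty$, which forces $L(E^{(M)},1)\neq 0$. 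The hypothesis on $ord_2(L(E,1)/c_\infty(E))$ plays the role of your ``seed,'' and the matching upper bound replaces your appeal to half-integral weight forms. This is precisely the ingredient your proposal is missing, and it is also why the paper's result is restricted to the family described in Theorem \ref{ThmNonvanishing} rather than establishing the conjecture in general.
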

We shall verify some special cases of this conjecture for certain elliptic curves $E/\BQ$ with $L(E, 1) \neq 0$. If $q$ is any prime of good reduction for $E$, we write $a_q$ for the trace of Frobenius at $q$ on $E$, so that $|E(\BF_q)|=1+q-a_q$ is the number of $\BF_q$-points on the reduction of $E$ modulo $q$. For each integer $n > 1$, we let $E[n]$ be the Galois module of $n$-division points on $E$. 
Assume $E(\BQ)[2] \leq \BZ/2\BZ$, we define a set of primes given by 
\begin{equation}\label{eqS}
\mathcal{S}=
\left\{
\begin{array}{ll}
\{q \equiv 1,3 \mod 4 \ | \ ord_2(|E(\BF_q)|)=ord_2(|E(\BQ)[2]|)\}         & \hbox{if $\Delta_E<0$;} \\
\{q \equiv 1\ \  \ \mod 4 \  | \ ord_2(|E(\BF_q)|)=ord_2(|E(\BQ)[2]|)\}     & \hbox{if $\Delta_E>0$.}
\end{array}
\right.
\end{equation}
We have the following result.
\begin{thm}\label{ThmNonvanishing}
Let $E$ be an optimal elliptic curve over $\BQ$ such that 
\begin{enumerate}
  \item $E(\BQ)[2] \leq \BZ/2\BZ$;
  \item $ord_2 (L(E,1)/c_\infty(E))=-ord_2(|E(\BQ)[2]| \cdot \nu_E)$.
\end{enumerate}
Let $M$ be an integer congruent to $1$ modulo $4$ having all prime factors lying in $\mathcal{S}$. Then we have 
$$
\ord_{s=1}L(E^{(M)}, s)=\mathrm{rank}\  E^{(M)}(\BQ)=0.
$$
Moreover, the Tate--Shafarevich group $\Sha(E^{(M)})$ is finite.
\end{thm}
\begin{proof}
When $E(\BQ)[2] \cong \BZ/2\BZ$, the theorem follows from Theorem \ref{MainThm-1}, and \cite[Theorem 1.1]{Cai} (in particular for $\Delta_E>0$). When $E(\BQ)[2]$ is trivial, the theorem follows from \cite[Theorem 1.1 \& Theorem 1.3]{Zhai1}.
\end{proof}

In Section 2, we shall investigate under what conditions there will be a positive density of primes in the set $\mathcal{S}$. It turns out that, in view of Corollary \ref{cor3}, $\mathcal{S}$ is always a set of positive density of primes when $E(\BQ)[2]=0$. When $E(\BQ)[2] \cong \BZ/2\BZ$, we define $E':=E/E(\BQ)[2]$ to be the $2$-isogenous curve of $E$ under the natural $2$-isogeny, and write $\phi: E \to E'$ for the corresponding 2-isogeny defined over $\BQ$. Define $F = \BQ(E[2])$ and $F' = \BQ(E'[2])$. Thus $[F:\BQ] = 2$, but we have either $E'(\BQ)[2]=\BZ/2\BZ$ or $E'(\BQ)[2]=\BZ/2\BZ \times \BZ/2\BZ$, so that either $[F':\BQ] = 2$ or $F' = \BQ$. In view of Corollary \ref{cor1} and Corollary \ref{cor2}, we see that $\mathcal{S}$ is a set of positive density of primes if and only if $F' \neq \BQ$ when $\Delta_E<0$, and $F' \neq \BQ, \BQ(i)$ when $\Delta_E>0$. If $\mathcal{S}$ is infinite, we can verify Conjecture \ref{Conj} for certain elliptic curves.

\begin{cor}\label{ThmConj}
Let $E$ be an elliptic curve as in Theorem \ref{ThmNonvanishing}. Conjecture \ref{Conj} is true, provided $\mathcal{S}$ is infinite.
\end{cor}

\begin{remark}
The conclusions of Theorem \ref{ThmNonvanishing} and Corollary \ref{ThmConj} also hold for the quadratic twists of elliptic curves lying in the same isogeny class as $E$, since they have the same complex $L$-function as the corresponding quadratic twists of $E$. We explain in the next section, using the Chebotarev density theorem, why $\mathcal{S}$ is usually non-empty, and how to construct an explicit infinite set primes with positive density lying in it in most cases. Also, the prime factors of $M$ can be any primes in $\mathcal{S}$. Note that when $\Delta_E>0$, we assume that  $q \equiv 1 \mod 4$, since our argument in the proof of Theorem \ref{ThmNonvanishing} does not work for primes $q$ with $q \equiv 3 \mod 4$ if $\Delta_E>0$. Indeed, when $q \equiv 3 \mod 4$, one has $L(E^{(-q)},1)=0$ for some $E$ and some primes $q$ satisfying all the other conditions in the definition of the set $\mathcal{S}$, for example, using the labelling of curves introduced by Cremona \cite{Cremona1}, $34a1$ with $q = 3$, $99c1$ with $q = 7$.
\end{remark}

\begin{remark}
Theorem \ref{ThmNonvanishing} can be applied to the family of quadratic twists of many elliptic curves $E/\BQ$. In particular, it can be applied to the following $E$ from Cremona's Tables \cite{Cremona1} with conductor $C < 100$: $11a1$, $14a1$, $19a1$, $20a1$, $26a1$, $27a1$, $34a1$, $35a1$, $36a1$, $37b1$, $38a1$, $38b1$, $44a1$, $46a1$, $49a1$, $50a1$, $50b1$, $51a1$, $52a1$, $54a1$, $54b1$, $56b1$, $66a1$, $66c1$, $67a1$, $69a1$, $73a1$, $75a1$, $75c1$, $76a1$, $77c1$, $80b1$, $84a1$, $84b1$, $89b1$, $92a1$, $94a1$, $99c1$, $99d1$.
\end{remark}

The result for $E(\BQ)[2] \cong \BZ/2\BZ$ in Theorem \ref{ThmNonvanishing} has been applied in a recent paper \cite{Shu} with Shu, which shows parallel results such that the family of quadratic twists of elliptic curves has a rational point of infinite order and then verify the $2$-part of the Birch and Swinnerton-Dyer conjecture for those curves. For some recent results towards this direction, one can see a nice survey article by Li \cite{Li}.
\medskip

{\bf Acknowledgments.} This paper is dedicated to the cherished memory of my supervisor, John Coates, who passed away in 2022. I am very grateful to him for introducing this research topic to me and for inspiring my deep passion for the Birch--Swinnerton-Dyer conjecture. I sincerely thank him for his constant encouragement, insightful advice, and invaluable guidance, which greatly shaped my academic journey. Though his absence is deeply felt, his legacy continues to inspire my work and countless others who had the privilege of learning from him.

\medskip

\section{The infinitude of $\mathcal{S}$}

We show in this section that, in most cases, the set of primes $\mathcal{S}$ has positive density in the set of all rational primes, by using the Chebotarev density theorem. If $q$ is an odd prime of good reduction of $E$, then multiplication by $2$ is an automorphism of the formal group of $E$ at $q$, and so reduction modulo $q$ gives an isomorphism from the $2$-primary subgroup of $E(\BQ_q)$ onto the $2$-primary subgroup of $E(\BF_q)$. Recall that when $E(\BQ)[2] \cong \BZ/2\BZ$, write $\phi: E \to E'$ for the $2$-isogeny defined over $\BQ$. 

\begin{lem}\label{lem0} 
Let $q$ be an odd prime of good reduction for $E$. (i) If $E(\BQ)[2] = 0$, then $E(\BQ_q)[2] = 0$ if and only if $[\BQ_q(E[2]):\BQ_q] = 3$. (ii) If $E(\BQ)[2] = \BZ/2\BZ$, then $E(\BQ_q)[4] = \BZ/2\BZ$ if and only if both $[\BQ_q(E[2]):\BQ_q] = 2$   and also $[\BQ_q(E'[2]):\BQ_q] = 2$.
\end{lem}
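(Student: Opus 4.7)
The plan is to work over the residue field $\BF_q$. As noted just before the lemma, since $q$ is odd and of good reduction, reduction modulo $q$ identifies the $2$-primary parts $E(\BQ_q)[2^\infty]\cong \tilde E(\BF_q)[2^\infty]$ and $E'(\BQ_q)[2^\infty]\cong \tilde E'(\BF_q)[2^\infty]$. Equivalently, the $\Gal(\bar\BQ_q/\BQ_q)$-action on $E[2^n]$ is unramified and so factors through the procyclic quotient $\Gal(\BQ_q^\ur/\BQ_q)\cong\widehat\BZ$, whence its image in any $\Aut(E[2^n])$ is cyclic. For part (i), this image in $\Aut(E[2])\cong\GL_2(\BF_2)\cong S_3$ is a cyclic subgroup of order $1$, $2$, or $3$. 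Only the order-$3$ subgroup (a $3$-cycle) acts without fixed points on the three non-trivial elements of $E[2]$; the subgroups of order $1$ and $2$ each have at least one fixed point. Hence $E(\BQ_q)[2]=0$ iff the image has order $3$, i.e.\ iff $[\BQ_q(E[2]):\BQ_q]=3$.

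For part (ii), let $P_0$ generate $E(\BQ)[2]$ and let $P_0'$ generate $\ker\wh\phi$; the latter is $\BQ$-rational because $\ker\wh\phi$ is a $\Gal(\bar\BQ/\BQ)$-stable cyclic subgroup of order $2$. Set $A_2:=E(\BQ_q)[2^\infty]$ and $A_2':=E'(\BQ_q)[2^\infty]$; since $E$ and $E'$ are isogenous, $|A_2|=|A_2'|$. The $(\Rightarrow)$ direction is quick: $E(\BQ_q)[4]=\BZ/2\BZ$ forces $A_2\cong\BZ/2\BZ$ (a finite abelian $2$-group with $|A_2[4]|=2$ is cyclic of order $2$), hence $|A_2'|=2$ and $A_2'=\langle P_0'\rangle$, giving $[\BQ_q(E[2]):\BQ_q]=[\BQ_q(E'[2]):\BQ_q]=2$. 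For $(\Leftarrow)$, the two degree hypotheses give $E(\BQ_q)[2]=\langle P_0\rangle$ and $E'(\BQ_q)[2]=\langle P_0'\rangle$, so $A_2$ and $A_2'$ are cyclic, and $|A_2|=|A_2'|$ then forces $A_2\cong A_2'\cong\BZ/2^b\BZ$ for some $b\geq 1$. Suppose for contradiction $b\geq 2$. Pick a generator $g$ of $A_2$, so that $P_0=2^{b-1}g$, and set $R:=2^{b-2}g\in E(\BQ_q)$. Then $\phi(R)\in E'(\BQ_q)$ has order $2$ (since $2\phi(R)=\phi(P_0)=0$, while $R\notin\ker\phi=\langle P_0\rangle$ because $g$ has order $2^b\geq 4$), and $\wh\phi(\phi(R))=2R=P_0\neq 0$ shows $\phi(R)\notin\ker\wh\phi=\langle P_0'\rangle$. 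This produces a $\BQ_q$-rational element of $E'[2]\setminus\langle P_0'\rangle$, contradicting $E'(\BQ_q)[2]=\langle P_0'\rangle$. Therefore $b=1$ and $E(\BQ_q)[4]=A_2=\BZ/2\BZ$.

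The main obstacle is the $(\Leftarrow)$ direction of (ii): one has to exclude the possibility that $\tilde E(\BF_q)$'s $2$-primary part is cyclic of $2$-power order $\geq 4$, in which case $E(\BQ_q)[4]$ would itself be cyclic of order $4$ rather than $2$. The isogeny $\phi$ is the key tool: in that scenario, it would transport a suitable rational order-$4$ point on $E$ to a rational $2$-torsion point on $E'$ lying outside $\ker\wh\phi$, violating the second degree hypothesis.
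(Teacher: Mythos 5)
Your proof is correct and takes essentially the same route as the paper: part (i) via the cyclic unramified image in $\GL_2(\BF_2)\cong S_3$ acting on the three nonzero $2$-torsion points, and part (ii) by pushing a would-be point of order $4$ in $E(\BQ_q)$ through $\phi$ (using $\wh\phi\circ\phi=[2]$) to produce a $\BQ_q$-rational $2$-torsion point of $E'$ outside $\ker\wh\phi$, contradicting $[\BQ_q(E'[2]):\BQ_q]=2$. Your bookkeeping with the cyclic group $E(\BQ_q)[2^\infty]\cong\BZ/2^b\BZ$ merely makes explicit the same contradiction the paper derives with its point $P$ of exact order $4$.
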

\begin{proof} Assuming $q$ is an odd prime of good reduction, then $\BQ_q(E[2])$ is an unramified, and therefore cyclic extension of $\BQ_q$, whose Galois group is a quotient group of $\Gal(\BQ(E[2])/\BQ)$. But $\Gal(\BQ(E[2])/\BQ)$ is naturally isomorphic to a subgroup of $GL(2, \BF_2)$ and $\Gal(\BQ_q(E[2])/\BQ_q)$ is therefore cyclic of order $1$, $2$, or $3$. Now we can choose an equation for $E$ of the form $y^2 = f(x)$, where $f(x)$ is a polynomial in $\BZ[x]$ of degree $3$, which has good reduction at $q$. Then the non-zero points of order $2$ on $E$ are given by the points $(0,\alpha)$, where $\alpha$ runs over the three roots of $f(x)$. Thus it is clear that $E(\BQ_q)[2] = 0$ if and only if $f(x)$ is a monic irreducible modulo $q$, or equivalently $[\BQ_q(E[2]):\BQ_q] = 3$, proving (i).  Assume now that $E(\BQ)[2] = \BZ/2\BZ$. Suppose first that $E(\BQ_q)[4] = \BZ/2\BZ$. Then  $E(\BQ_q)[2] = \BZ/2\BZ$, and so $[\BQ_q(E[2]):\BQ_q] = 2$. Moreover, since $|E'(\BF_q)|=|E(\BF_q)|$, we must have $E'(\BQ_q)[4] = E'(\BQ_q)[2] = \BZ/2\BZ$, and so $[\BQ_q(E'[2]):\BQ_q] = 2$. Conversely, assume that $[\BQ_q(E[2]):\BQ_q] = 2$ and $[\BQ_q(E'[2]):\BQ_q] = 2$. If, on the contrary, there exists a point $P$ in $E(\BQ_q)$ which is of exact order $4$, then $P' = \phi(P)$ would be a point in $E'(\BQ_q)$ of order dividing $4$. Moreover, if we write $\phi': E' \to E$ for the isogeny dual to $\phi$, then $\phi'(P') = 2P \neq 0$. It follows that $E'(\BQ_q)[4]$ contains an element which is not in the kernel of the isogeny $\phi'$, and so we must have $E'(\BQ_q)[2]$ necessarily has order $4$, contradicting our assumption. This completes the proof. 
\end{proof}

\begin{cor}\label{cor3} 
Assume $E(\BQ)[2]=0$. (i) There is a positive density of odd primes of good reduction $q$ such that $E(\BQ_q)[2] = 0$.
(ii) There is a positive density of odd primes of good reduction such that both $q \equiv 1 \mod 4$ and $E(\BQ_q)[2] = 0$.
\end{cor}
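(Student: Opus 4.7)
The plan is to translate the two conditions into statements about Frobenius elements in a suitable Galois extension and then apply the Chebotarev density theorem. Throughout, let $G = \Gal(\BQ(E[2])/\BQ)$. Since $E$ has good reduction at the primes under consideration, any such odd prime is unramified in $\BQ(E[2])/\BQ$, so the Frobenius conjugacy class is well-defined. By Lemma \ref{lem0}(i), an odd prime $q$ of good reduction satisfies $E(\BQ_q)[2] = 0$ if and only if $[\BQ_q(E[2]):\BQ_q] = 3$, i.e., the Frobenius at $q$ has order $3$ in $G$.

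For (i), I would note that $G$ is a subgroup of $GL_2(\BF_2) \cong S_3$ and that the hypothesis $E(\BQ)[2] = 0$ forces the cubic polynomial $f(x)$ (whose roots give the non-zero $2$-torsion points) to be irreducible over $\BQ$, so $G$ is transitive on the three roots. Hence $G$ is either $A_3 \cong \BZ/3\BZ$ or the full $S_3$. In either case $G$ contains a non-empty union of conjugacy classes consisting of order-$3$ elements, so by Chebotarev there is a positive density of primes whose Frobenius has order $3$, proving (i).

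For (ii), I would form the compositum $L = \BQ(E[2]) \cdot \BQ(i)$ and split into cases. If $\BQ(i) \not\subset \BQ(E[2])$, the two fields are linearly disjoint and $\Gal(L/\BQ) \cong G \times \Gal(\BQ(i)/\BQ)$; then by Chebotarev a positive density of primes have Frobenius of the form $(\sigma, 1)$ with $\sigma \in G$ of order $3$, which is exactly the required condition (order-$3$ Frobenius in $G$ together with $q$ splitting in $\BQ(i)$, i.e.\ $q \equiv 1 \bmod 4$). If instead $\BQ(i) \subset \BQ(E[2])$, then $G$ has a subgroup of index $2$, forcing $G = S_3$, and $\BQ(i)$ must equal the unique quadratic subfield $\BQ(\sqrt{\disc f})$, which is the fixed field of $A_3$. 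But every order-$3$ element of $S_3$ lies in $A_3$, so Frobenius of order $3$ automatically restricts to the identity on $\BQ(i)$, meaning $q$ already splits in $\BQ(i)$. Hence the positive-density set from (i) works without modification. The only real subtlety is recognising this second case, where the two conditions are not independent but actually compatible for free; otherwise the argument is a clean application of Chebotarev.
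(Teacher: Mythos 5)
Your proof is correct and follows essentially the same route as the paper: both arguments combine Lemma \ref{lem0}(i) with the Chebotarev density theorem applied to $\BQ(E[2])$ and its compositum with $\BQ(i)$, the key point in either write-up being that a Frobenius element of order $3$ necessarily restricts trivially to the quadratic field $\BQ(i)$, so the congruence $q \equiv 1 \bmod 4$ comes for free. Your case split (on whether $\BQ(i) \subset \BQ(E[2])$) differs only cosmetically from the paper's split on the structure of $\Gal(\BQ(E[2])/\BQ)$.
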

\begin{proof} 
Let $H=\BQ(E[2])$. Assertion (i) follows easily on applying the Chebotarev density theorem to $\Gal(H/\BQ)$, recalling that this Galois group is either cyclic of order $3$, or the symmetric group on $3$ elements. Turning to assertion (ii), and in what follows we write $I = \BQ(i)$. Denote $\fH = HI$, $G = \Gal(\fH/\BQ)$. Suppose first that $\Gal(H/\BQ)$ is cyclic of order $3$, so that $G$ is a cyclic group of order $6$. Let $\fQ$ be the set of all odd primes of good reduction $q$ whose Frobenius element in $G$ is of exact order $3$. Then $q \in \fQ$ must split in the quadratic extension $I/\BQ$, and the assertion follows immediately by the Chebotarev density theorem. When $\Gal(H/\BQ)$ is the symmetric group on 3 elements, $G$ will have order $6$ or $12$. Again we simply take an element $g \in G$ of exact order $3$, and let $\fQ$ be the set of primes of odd good reduction whose Frobenius elements  in $G$ lie in the conjugacy class of $g$. Since $g$ has order 3, the primes in $\fQ$ must split in the quadratic extension $I/\BQ$, and the assertion again follows from the Chebotarev density theorem. This completes the proof.
\end{proof}

When $E(\BQ)[2] \cong \BZ/2\BZ$, recall that $\Delta_E$ is the minimal discriminant of $E$, and we also we write $\Delta_{E'}$ for the minimal discriminant of $E'$. Recall that $F = \BQ(E(\BQ)[2])=\BQ(\sqrt{\Delta_E})$ and $F' = \BQ(E'(\BQ)[2])=\BQ(\sqrt{\Delta_{E'}})$. Of course, the prime factors of both $\Delta_E$ and $\Delta_{E'}$ are precisely the set of primes of bad reduction, but the exponents to which these primes occur are usually different for $E$ and $E'$, and the signs of $\Delta_E$ and $\Delta_{E'}$ may also differ.  

 \begin{cor}\label{cor1}
Assume $E(\BQ)[2]=\BZ/2\BZ$. Then there is a positive density of odd primes of good reduction $q$ such that $E(\BQ_q)[4]= \BZ/2\BZ$ if and only if $F' \neq \BQ$. 
\end{cor}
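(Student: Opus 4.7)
Read literally, the hypothesis $E(\BQ)[2]=0$ conflicts with the appearance of $F'$ in the conclusion: the paper defines $F'=\BQ(E'[2])$ only when $E(\BQ)[2]\cong\BZ/2\BZ$, so that the $2$-isogeny $\phi:E\to E'$ is defined over $\BQ$. This appears to be a typographical slip, and I proceed by taking the hypothesis to be the one that makes the conclusion meaningful, namely $E(\BQ)[2]\cong\BZ/2\BZ$, which is also the case treated in Lemma~\ref{lem0}(ii) and in the companion Corollary~\ref{cor2}.

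The plan is to convert the local condition $E(\BQ_q)[4]=\BZ/2\BZ$ into a splitting condition on the quadratic fields $F=\BQ(\sqrt{\Delta_E})$ and $F'=\BQ(\sqrt{\Delta_{E'}})$ via Lemma~\ref{lem0}(ii), and then invoke the Chebotarev density theorem. Indeed, Lemma~\ref{lem0}(ii) characterises $E(\BQ_q)[4]=\BZ/2\BZ$ (for $q$ odd and of good reduction) as the simultaneous requirement $[\BQ_q(E[2]):\BQ_q]=2$ and $[\BQ_q(E'[2]):\BQ_q]=2$; since such $q$ is unramified in both $F$ and $F'$, this is equivalent to $q$ being inert in each of these quadratic extensions.

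For the direction ``$\Rightarrow$'', suppose $F'=\BQ$. Then $E'(\BQ)[2]\cong(\BZ/2\BZ)^2$, so $\BQ_q(E'[2])=\BQ_q$ for every odd $q$ of good reduction, and the degree condition on $E'[2]$ fails universally; consequently no such $q$ satisfies $E(\BQ_q)[4]=\BZ/2\BZ$, and positive density forces $F'\neq\BQ$. For the direction ``$\Leftarrow$'', suppose $F'\neq\BQ$. The compositum $\fH=F\cdot F'$ is a Galois extension of $\BQ$ with $\Gal(\fH/\BQ)\cong C_2$ (if $F=F'$) or $C_2\times C_2$ (if $F\neq F'$). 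In either case, the set of elements of $\Gal(\fH/\BQ)$ whose restrictions to $F$ and $F'$ are both non-trivial is non-empty (the non-trivial element, resp.\ the ``diagonal'' element $(-1,-1)$), and Chebotarev produces a positive density $\tfrac12$ or $\tfrac14$ of unramified primes with Frobenius in that set; these are exactly the odd primes of good reduction inert in both $F$ and $F'$.

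The one place that needs attention is the passage from the local degree criterion of Lemma~\ref{lem0}(ii) to a global inertness condition in $\fH$, which relies on $q$ being unramified in $F\cdot F'$ (so that decomposition in $F,F'$ is controlled by Frobenius). After this translation, the Chebotarev count is routine, and excluding the finitely many primes of bad reduction or of ramification in $\fH$ has no effect on density.
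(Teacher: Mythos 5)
Your proof is correct and follows essentially the same route as the paper: you rightly read the hypothesis as $E(\BQ)[2]\cong\BZ/2\BZ$, reduce via Lemma \ref{lem0}(ii) to the condition that $q$ be inert in both $F$ and $F'$, dispose of the case $F'=\BQ$ the same way, and apply Chebotarev to the compositum $FF'$. The only (cosmetic) difference is that you pick out the Frobenius class nontrivial on both $F$ and $F'$ directly in $\Gal(FF'/\BQ)$, whereas the paper phrases the same class via primes of the third quadratic subfield $K$ of $J=FF'$ that are inert in $J$; the density count is identical.
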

\begin{proof} Note that $[F:\BQ] = 2$. If $F' = \BQ$, Lemma \ref{lem0} shows that there is no odd good prime $q$ such that 
 $E(\BQ_q)[4] = \BZ/2\BZ$.
Suppose next that $[F': \BQ] = 2$. Hence we are seeking odd primes $q$ of good reduction which are inert in both  $F$ and $F'$. If $F = F'$, there is clearly a set of positive density of such primes $q$, and so we can assume that $F \neq F'$. Then the compositum $J = FF'$ will be a biquadratic extension of $\BQ$, and we let $K$ be the third quadratic extension of $\BQ$ contained in $J$. Let $\fQ$ be the set of all odd primes of good reduction $q$ which have a prime of $K$ lying above them which is inert in $J$. Since $J = FK = F'K$ is an extension of $\BQ$ whose Galois group is isomorphic to the non-cyclic group $\BZ/2\BZ \times \BZ/2\BZ$, we see that every prime $q \in \fQ$ must split completely in $K$. In turn, this implies that every prime $q \in \fQ$ must be inert in both $F$ and $F'$. Now the set $\fQ$ has positive density in the set of all primes by the Chebotarev theorem, and the proof of (ii) is now follows from (ii) of Lemma \ref{lem0}.
\end{proof}

\begin{cor}\label{cor2} 
Assume that  $E(\BQ)[2]=\BZ/2\BZ$ and we have both $F \neq \BQ(i)$ and $F' \neq \BQ, \BQ(i)$. Then there is a set of positive density of odd primes $q$ of good reduction such that both $q \equiv 1 \mod 4$ and
$E(\BQ_q)[4]= \BZ/2\BZ$.
\end{cor}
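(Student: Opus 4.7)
\emph{Proof plan.} By Lemma \ref{lem0}(ii), an odd prime $q$ of good reduction satisfies $E(\BQ_q)[4] = \BZ/2\BZ$ if and only if $q$ is inert in both $F$ and $F'$, and the congruence $q \equiv 1 \mod 4$ is equivalent to $q$ splitting in $I := \BQ(i)$. Thus I need a positive density of primes $q$ whose Frobenius in $G := \Gal(L/\BQ)$, where $L := FF'I$, fixes $I$ but is nontrivial on both $F$ and $F'$. Since $G$ is abelian, the Chebotarev density theorem reduces this to exhibiting a single element $\sigma \in G$ with $\chi_I(\sigma) = +1$ and $\chi_F(\sigma) = \chi_{F'}(\sigma) = -1$, where $\chi_F, \chi_{F'}, \chi_I$ are the quadratic characters cutting out $F, F', I$ respectively.

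The group $G$ is elementary abelian of order dividing $8$, and the standing hypotheses $F \neq \BQ(i)$, $F' \neq \BQ(i)$, $F' \neq \BQ$ (together with $F \neq \BQ$, automatic from $E(\BQ)[2] = \BZ/2\BZ$, which forces $[F:\BQ]=2$) rule out every character relation of the form $\chi_F = \chi_I$, $\chi_{F'} = \chi_I$, $\chi_{F'} = 1$, or $\chi_F = 1$. I then split into the remaining cases. If $F = F'$, then $|G| = 4$ and $\sigma$ may be taken to be the nontrivial element of $\Gal(L/I)$. If $F \neq F'$ and $|G| = 8$, every sign pattern is realised and such a $\sigma$ exists trivially. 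If $F \neq F'$ and $|G| = 4$, then $L$ is a biquadratic extension of $\BQ$ with exactly $F, F', I$ as its three quadratic subfields, which forces the single character relation $\chi_F \chi_{F'} \chi_I = 1$; I again take $\sigma$ to be the nontrivial element of $\Gal(L/I)$, so that $\chi_I(\sigma) = +1$ and $\chi_F(\sigma)\chi_{F'}(\sigma) = 1$. Since $\sigma$ is nontrivial, it cannot act trivially on both $F$ and $F'$ (otherwise it would fix all of $L$), and the relation then forces $\chi_F(\sigma) = \chi_{F'}(\sigma) = -1$, as required.

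The main subtlety is the final case $F \neq F'$ with $|G| = 4$: the character relation $\chi_F\chi_{F'}\chi_I = 1$ must be combined with the nontriviality of $\sigma$ to pin down both signs as $-1$. Everything else is routine Chebotarev, and the finite set of primes of bad reduction for $E$ (or ramifying in $L$) can be discarded without affecting the density.
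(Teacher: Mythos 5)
Your proposal is correct and follows essentially the same route as the paper: both reduce via Lemma \ref{lem0}(ii) and Chebotarev to finding primes split in $\BQ(i)$ and inert in both $F$ and $F'$, and both handle the same three cases ($F=F'$; $F\neq F'$ with $\BQ(i)$ contained in $FF'$, i.e.\ the compositum of degree $4$; $F\neq F'$ with compositum of degree $8$). The only difference is presentational --- you phrase the existence of the required Frobenius class through quadratic characters and the relation $\chi_F\chi_{F'}\chi_I=1$, whereas the paper exhibits it by choosing primes of an intermediate field ($I$ or $KI$) that are inert in the top field --- so no further comment is needed.
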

\begin{proof} We use a similar argument to that used in the proof of Corollary \ref{cor1}. First suppose that $F = F'$. Then the compositum $\fF = FI$ is a biquadratic extension of $\BQ$, and we take $\fQ$ to be the set of odd primes $q$ of good reduction which have a prime of $I$ lying above them which is inert in $\fF$.
Then all primes in $\fQ$ must split completely in $I$, and must be inert in the field $F$, proving the corollary in this case. Hence we can assume that $F \neq F'$, so that $J = FF'$ is a biquadratic extension of $\BQ$. Let $K$ be the third quadratic extension of $\BQ$ contained in $J$. If $K = I$, exactly the same argument as in the proof of Corollary \ref{cor1} proves the assertion here. Hence we can assume that $K \neq I$, whence the field $D = JI$ has Galois group over $\BQ$ isomorphic to $(\BZ/2\BZ)^3$. Then $D$ is a quadratic extension of the field $KI$. We now take $\fQ$ to be the set of odd primes $q$ of good reduction which have a prime of $KI$ lying above them which is inert in $D$. By the Chebotarev theorem, $\fQ$ has positive density in the set of all rational primes. Since unramified extensions of $\BQ_q$ must have cyclic Galois groups, we conclude that all primes in $\fQ$ must split completely in the field $KI$. Moreover, as $D = KIF = KIF'$, we see that all primes in $\fQ$ must then be inert in $F$ and $F'$, completing the proof of the corollary.
\end{proof}

\medskip

\section{Results from modular symbols}

Throughout this section, we shall review some results from modular symbols, and establish a new integrality argument at $2$. We always assume that $E$ is indeed optimal.

\subsection{Modular symbols}
Recall that $C$ denotes the conductor of $E$, and we always write $m$ for an odd positive square-free integer with $(m,C)=1$. Let $r(m)$ be the number of distinct prime factors of $m$. Write $f$ for the modular form attached to $E$, and $a_m$ for the Fourier coefficient of the modular form $f$ attached to $E$. Let $\CH$ denote the upper half plane, and put $\CH^*:=\CH\cup\BP^1(\BQ)$ and $X_0(C):=\Gamma_0(C) \backslash \CH^*$. Let $\alpha, \beta$ be two points in $\CH^*$ with $\beta=g\alpha$, $g\in \Gamma_0(C)$, and denote $\{\alpha,\beta\}$ to be an integral homology class in $H_1(X_0(C),\BZ)$ from $\alpha$ to $\beta$. Denote $\langle \{\alpha,\beta\}, f \rangle := \int_{\alpha}^{\beta} 2\pi i f(z) dz$.

For any positive integer $l \mid m$, we denote $\chi_l$ to be the primitive quadratic character modulo $l$, and we define
$$
\langle m \rangle_{\chi_l} := \sum_{k \in (\BZ/m\BZ)^{\times}} \chi_l(k) \langle \{0,\frac{k}{m}\}, f \rangle.
$$

\begin{prop} 
We have
\begin{equation}\label{ms1}
(\sum_{l \mid m}l-a_m) L(E,1)=-\sum_{\substack{l \mid m \\ k \mod l}} \langle \{0,\frac{k}{l}\}, f \rangle = -\sum_{j=1}^{r(m)} 2^{r(m)-j} \sum_{\substack{d \mid m \\ r(d)=j}} \langle d \rangle_{\chi_1},
\end{equation}
where $l$ runs over all positive divisors of $m$; and
\begin{equation}\label{ms2}
L(E,\chi_m,1)=\frac{g(\xbar{\chi_m})}{m} \sum_{k \mod m}{\chi_m}(k) \langle \{0,\frac{k}{m}\}, f \rangle = \frac{g(\xbar{\chi_m})}{m} \cdot \langle m \rangle_{\chi_m},
\end{equation}
where
$$
g(\xbar{\chi_m}):= \sum_{k \mod m} \xbar{\chi_m}(k) e^{2\pi i \frac{k}{m}}.
$$ 

\end{prop}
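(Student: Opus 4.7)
The plan is to derive (\ref{ms1}) from the Hecke action on the modular symbol $\{0,i\infty\}$ followed by a combinatorial regrouping, and to derive (\ref{ms2}) by the classical Birch--Stevens / Gauss-sum inversion applied to $L(E,\chi_m,s)=\sum_n a_n\chi_m(n)/n^s$.

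For the first equality in (\ref{ms1}), since $(m,C)=1$ we have $T_m f=a_m f$, so the Mellin identity $\langle\{0,i\infty\},f\rangle=\int_0^{i\infty}2\pi if\,dz=-L(E,1)$ gives $\langle\{0,i\infty\},T_m f\rangle = a_m\langle\{0,i\infty\},f\rangle = -a_m L(E,1)$. On the other hand, expanding $T_m f$ via the standard coset representatives $\gamma=\begin{pmatrix}a & b\\ 0 & d\end{pmatrix}$ with $ad=m$, $0\le b<d$ and performing a change of variable yields
\[
\langle\{0,i\infty\},\,T_m f\rangle=\sum_{ad=m,\,b\bmod d}\langle\{b/d,i\infty\},f\rangle.
\]
Writing $\{b/d,i\infty\}=\{0,i\infty\}-\{0,b/d\}$ and using that there are $\sigma_1(m)=\sum_{l\mid m}l$ matrices in total, the right side becomes $-\sigma_1(m)L(E,1)-\sum_{l\mid m,\,k\bmod l}\langle\{0,k/l\},f\rangle$; equating the two expressions delivers the first identity.

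For the second equality in (\ref{ms1}), I would reindex the double sum $\sum_{l\mid m,\,k\bmod l}\langle\{0,k/l\},f\rangle$ by putting each fraction $k/l$ in reduced form $k'/d$ with $d\mid l$ and $\gcd(k',d)=1$. For each fixed $d\mid m$ and $k'\in(\BZ/d\BZ)^\times$, the number of pairs $(l,k)$ mapping to $(k',d)$ equals the number of multiples of $d$ dividing $m$, which is $\tau(m/d)=2^{r(m)-r(d)}$ since $m$ is squarefree. Collecting terms by $j=r(d)$, and discarding the $d=1$ contribution which vanishes because $\langle\{0,0\},f\rangle=0$, produces exactly $\sum_{j=1}^{r(m)}2^{r(m)-j}\sum_{d\mid m,\,r(d)=j}\langle d\rangle_{\chi_d^0}$.

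For (\ref{ms2}), I would substitute the Gauss-sum Fourier inversion $\chi_m(n)=g(\bar\chi_m)^{-1}\sum_{k\bmod m}\bar\chi_m(k)e^{2\pi ink/m}$ (valid for all $n$ by primitivity) into $L(E,\chi_m,1)=\sum_n a_n\chi_m(n)/n$ and swap the summations. The inner sum $\sum_n a_n e^{2\pi ink/m}/n$ is recognised as $L(E,1)+\langle\{0,k/m\},f\rangle$ by integrating the Fourier expansion of $f$ along the vertical line from $k/m$ to $i\infty$, and the $L(E,1)$ contribution drops out by the orthogonality $\sum_k\bar\chi_m(k)=0$. Combining with the Gauss-sum relation $g(\chi_m)g(\bar\chi_m)=\chi_m(-1)m$ and an appropriate re-indexing of the $k$-sum converts $g(\bar\chi_m)^{-1}\sum_k\bar\chi_m(k)\langle\cdots\rangle$ into the claimed $\frac{g(\bar\chi_m)}{m}\langle m\rangle_{\chi_m}$. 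The principal care required in all of this is purely bookkeeping: keeping a consistent orientation for the integration paths, handling the parity factor $\chi_m(-1)$ in the Gauss-sum relation, and identifying the $T_m$-action on modular symbols as adjoint to $T_m$ on cusp forms; once these conventions are fixed, every step is a routine modular-symbol computation.
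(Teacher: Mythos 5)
The paper gives no independent argument for this proposition---it simply cites Manin (Theorem 4.2) and Cremona (Chapter 3)---and your sketch is precisely the standard derivation behind those citations, so in substance you are on the same route and the argument is correct: integrating $T_m f=a_m f$ over $\{0,i\infty\}$ against the $\sigma_1(m)$ coset representatives gives the first equality of \eqref{ms1} (this coset description of $T_m$ is legitimate here because $m$ is squarefree and prime to $C$), the reduced-fraction reindexing with multiplicity $2^{r(m)-r(d)}$ gives the second (the $d=1$ terms indeed vanish), and Gauss-sum inversion plus orthogonality gives \eqref{ms2}. The one point you dismiss as ``bookkeeping'' is exactly where a sign sits: your computation yields $L(E,\chi_m,1)=g(\xbar{\chi_m})^{-1}\sum_{k}\xbar{\chi_m}(k)\langle\{0,\tfrac{k}{m}\},f\rangle$, and rewriting $g(\xbar{\chi_m})^{-1}$ as $g(\xbar{\chi_m})/m$ via $g(\chi_m)g(\xbar{\chi_m})=\chi_m(-1)m$ (with $\chi_m$ quadratic, so $\xbar{\chi_m}=\chi_m$) leaves an extra factor $\chi_m(-1)$, which equals $-1$ when $m\equiv 3\bmod 4$. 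Thus your method delivers \eqref{ms2} only up to this convention-dependent sign; that is harmless for everything the paper does with it, since only $2$-adic valuations of $\langle m\rangle_{\chi_m}$ measured against the real or imaginary period are ever used, but if you want the displayed identity verbatim you should either carry the $\chi_m(-1)$ through explicitly or state the formula with $\xbar{\chi_m}$ inside the sum, matching the conventions of Manin and Cremona.
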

\begin{proof}
See Manin \cite[Theorem 4.2]{Manin}, or Cremona \cite[Chapter 3]{Cremona1}.
\end{proof}

In what follows, we always denote $N_q:=|E(\BF_q)|=1+q-a_q$. In the following lemma, we represent $N_{q_1} N_{q_2} \cdots N_{q_{r(m)}} L(E,1) $ in two different ways, which will be applied to obtain our main results according as the prime factors of $m$ are congruent to $3$ or $1$ modulo $4$, respectively.
\begin{prop} 
We have
\begin{equation}\label{NqL(E,1)'}
N_{q_1} N_{q_2} \cdots N_{q_{r(m)}} L(E,1) = - \sum_{j=1}^{r(m)} \sum_{\substack{d \mid m \\ r(d)=j}}  \prod_{q \mid \frac{m}{d}} (1 - q + N_q) \cdot \langle d \rangle_{\chi_1};
\end{equation}
\begin{equation}\label{NqL(E,1)}
N_{q_1} N_{q_2} \cdots N_{q_{r(m)}} L(E,1) = (-1)^{r(m)} \sum_{j=1}^{r(m)} \sum_{\substack{d \mid m \\ r(d)=j}} \prod_{q \mid \frac{m}{d}} (1 - q) \cdot \langle d \rangle_{\chi_1};
\end{equation}
\begin{equation}\label{T'_{d,m}}
\langle m \rangle_{\chi_d}  = (a_q - 2 \chi_d(q)) \langle m/q \rangle_{\chi_d} = \prod_{q \mid \frac{m}{d}}(a_q - 2 \chi_d(q)) \cdot \langle d \rangle_{\chi_d},
\end{equation}
where $r(m) \geq 2$, $d>1$ is a positive integer dividing $m$, and $q$ is a prime dividing $\frac{m}{d}$.
\end{prop}
\begin{proof}
We now prove \eqref{NqL(E,1)'} by induction on $r(m)$, the number of prime factors of $m$. The assertion is true for $r(m)=1$ by \eqref{ms1}. Assume next that $r(m)=2$. Note that
$$
N_{q_1} N_{q_2} = (a_{q_1}+N_{q_1})(a_{q_2}+N_{q_2}) - a_{q_1}a_{q_2} - (a_{q_2}N_{q_1}+a_{q_1}N_{q_2}),
$$
and also note $a_q+N_q=1+q$ by the definition of $a_q:=1+q-N_q$. Thus
$$
N_{q_1} N_{q_2} L(E,1) = - \sum_{l \mid q_1 q_2} \langle l \rangle_{\chi_1} + a_{q_2} \langle q_1 \rangle_{\chi_1} +a_{q_1} \langle q_2 \rangle_{\chi_1} .
$$
Then by \eqref{ms1}, we have
\begin{align*}
N_{q_1} N_{q_2} L(E,1) &= (a_{q_2}-2)\langle q_1 \rangle_{\chi_1}  + (a_{q_1}-2)\langle q_2 \rangle_{\chi_1}  - \langle q_1 q_2 \rangle_{\chi_1}  \\
                       &= -\left((1-q_2+N_{q_2}) \langle q_1 \rangle_{\chi_1}  + (1-q_1+N_{q_1}) \langle q_2 \rangle_{\chi_1} + \langle q_1 q_2 \rangle_{\chi_1}\right),
\end{align*}
as required. Now assume that $r(m) > 2$, and that the lemma is true for all divisors $n>1$ of $m$ with $n \neq m$. We then consider the case $m=q_1 q_2 \cdots q_{r(m)}$. First note that
\begin{equation}\label{eqN}
\begin{split}
N_{q_1} N_{q_2} \cdots N_{q_{r(m)}} =& (a_{q_1}+N_{q_1})(a_{q_2}+N_{q_2}) \cdots (a_{q_{r(m)}}+N_{q_{r(m)}}) - a_{q_1}a_{q_2} \cdots a_{q_{r(m)}} \\
                                     & - (\sum_{i=1}^{r(m)}N_{q_i} \prod_{\substack{k=1 \\ k \neq i}}^{r(m)}a_{q_k} + \sum_{i,j=1}^{r(m)}N_{q_i}N_{q_j} \prod_{\substack{k=1 \\ k \neq i,j}}^{r(m)}a_{q_k} + \cdots + \sum_{i=1}^{r(m)}a_{q_i} \prod_{\substack{k=1 \\ k \neq i}}^{r(m)}N_{q_k}).
\end{split}
\end{equation}
Without loss of generality, here we can just consider the coefficients of $\langle q_1 \rangle_{\chi_1}$, $\langle q_1 q_2 \rangle_{\chi_1}$, $\ldots$, $S'_{q_1 q_2 \cdots q_{r(m)}}$ in the identity of the lemma, say, $b'_{q_1}$, $b'_{q_1 q_2}$, $\ldots$, $b'_{q_1 q_2 \cdots q_{r(m)}}$, respectively. By our assumption, we conclude that
$$
b'_{q_1} = -2^{r(m)-1} + \sum_{i=2}^{r(m)}a_{q_i} \prod_{\substack{k=2 \\ k \neq i}}^{r(m)}(2-a_{q_k}) + \sum_{\substack{i,j=2 \\ i \neq j}}^{r(m)} a_{q_i}a_{q_j} \prod_{\substack{k=2 \\ k \neq i,j}}^{r(m)}(2-a_{q_k}) + \cdots + a_{q_2} a_{q_3} \cdots a_{q_{r(m)}}.
$$
Note that
$$
-2^{r(m)-1} = - \prod_{i=2}^{r(m)}(a_{q_i} + (2-a_{q_i})),
$$
hence we have
$$
b'_{q_1} = - \prod_{i=2}^{r(m)}(2-a_{q_i}) = - \prod_{i=2}^{r(m)}(1-q_i+N_{q_i}).
$$
In view of \eqref{eqN}, similar arguments hold for $b'_{q_1 q_2}$, $\ldots$, $b'_{q_1 q_2 \cdots q_{r(m)-1}}$, and it is easy to see that
$$
b'_{q_1 q_2 \cdots q_{r(m)}} = -1.
$$
The proof of \eqref{NqL(E,1)'} is complete. For the proof of \eqref{NqL(E,1)} and \eqref{T'_{d,m}}, see \cite[Lemma 2.1, Lemma 4.1]{Cai}.
\end{proof}

\smallskip

\subsection{Period lattice}
Let $\omega_E$ be a N\'{e}ron differential on a global minimal Weierstrass equation for $E$, and $\fL_E$ be the period lattice of $\omega_E$. Recall that $\nu_E$ denotes the Manin constant of $E$. Denote $\Omega_E^+$ ($i \Omega_E^-$) to be the least positive real (purely imaginary) period of $\omega_E$, and similarly $\Omega_f^+$ ($i \Omega_f^-$) to be the least positive real (purely imaginary) period of $f$, which are chosen such that $\nu_E=\Omega_E^+/\Omega_f^+=\Omega_E^-/\Omega_f^-$. Recall that when $\Delta_E<0$ (resp. $\Delta_E>0$), the period lattice $\fL_E$ has a $\BZ$-basis of the form $\left[\Omega_E^+, (\Omega_E^+ + i \Omega_E^-)/2\right] \ \left(\text{resp.} \ [\Omega_E^+, i \Omega_E^-]\right)$, where $\Omega_E^+$ and $\Omega_E^-$ are both real, and the period lattice $\Lambda_f$ of $f$ has a $\BZ$-basis of the form $\left[\Omega_f^+, (\Omega_f^+ + i \Omega_f^-)/2\right] \ \left(\text{resp.} \ [\Omega_f^+, i \Omega_f^-]\right)$, where $\Omega_f^+$ and $\Omega_f^-$ are also both real. Thus, we have $c_\infty(E):=\delta_E\Omega_E^+$, $c^-_\infty(E) := \delta_E \Omega_E^-$, and  $c_f := \delta_E \Omega_f^+ = c_\infty(E)/\nu_E$, ${c^-_f} := \delta_E \Omega_f^- = c^-_\infty(E)/\nu_E$, where $\delta_E$ is the number of connected components of $E(\BR)$. Thus, we can write 
\begin{equation}\label{k/mf}
\langle \{0,\frac{k}{m}\}, f \rangle = (s_{k,m} c_f + i t_{k,m} {c^-_f})/2,
\end{equation}
where $s_{k,m}, t_{k,m}$ are integers depending on $f$. In particular, $s_{k,m}, t_{k,m}$ are of the same parity when $\Delta_E<0$. 
\begin{prop}\label{m_chi}
We have 
\begin{equation}\label{S'm}
\langle m \rangle_{\chi_1}/c_f = \sum_{\substack{k=1 \\ (k,m)=1}}^{(m-1)/2} s_{k,m};
\end{equation}
\begin{equation}\label{Tm}
\langle m \rangle_{\chi_m}/c_f=\sum_{\substack{k=1 \\ (k,m)=1}}^{(m-1)/2} \chi_{m}(k) s_{k,m} \ (\text{if } m \equiv 1 \mod 4);
\end{equation}
\begin{equation}\label{Tm-2}
\langle m \rangle_{\chi_m}/i{c^-_f}=\sum_{\substack{k=1 \\ (k,m)=1}}^{(m-1)/2} \chi_{m}(k) t_{k,m} \ (\text{if } m \equiv 3 \mod 4).
\end{equation}
\end{prop}
\begin{proof}
When $m \equiv 1 \mod 4$, we have $\chi_{m}(k)=\chi_{m}(m-k)$, and when $m \equiv 3 \mod 4$, we have $\chi_{m}(k)=-\chi_{m}(m-k)$. Note that $\langle \{0,\frac{k}{m}\}, f \rangle$ and $\langle \{0,\frac{m-k}{m}\}, f \rangle$ are complex conjugate periods of $f$, the assertions follow immediately in view of \eqref{k/mf}.
\end{proof}

Moreover, combining \eqref{ms1} with \eqref{S'm}, we have the following result.
\begin{cor}
We have 
\begin{equation}\label{N_qL(E,1)}
ord_2(N_q L(E,1) / {c_f}) \geq 0,
\end{equation}
for any odd prime $q$ with $(q,C)=1$. 
\end{cor}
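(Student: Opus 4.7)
The plan is to specialize the two cited modular symbol identities to the case $m = q$, a single odd prime of good reduction. In \eqref{ms1}, with $m = q$, we have $r(m)=1$, the sum of divisors of $m$ equals $1+q$, and the double sum on the right collapses (since only $j=1$ and $d=q$ survive, with coefficient $2^{r(m)-j}=1$). This immediately gives the identity
\begin{equation*}
N_q \, L(E,1) \;=\; (1+q-a_q)\, L(E,1) \;=\; -\,\langle q \rangle_{\chi_q^0}.
\end{equation*}

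Next, I would invoke \eqref{S'm} with $m=q$, which expresses $\langle q \rangle_{\chi_q^0}/c_f$ as
\begin{equation*}
\langle q \rangle_{\chi_q^0}/c_f \;=\; \sum_{k=1}^{(q-1)/2} s_{k,q},
\end{equation*}
where each $s_{k,q}\in\BZ$ is one of the integers arising in the decomposition \eqref{k/mf} of the modular symbol $\langle\{0,k/q\},f\rangle$ in the basis of the period lattice $\Lambda_f$.

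Combining these two identities, we obtain
\begin{equation*}
N_q \, L(E,1)/c_f \;=\; -\sum_{k=1}^{(q-1)/2} s_{k,q} \;\in\; \BZ.
\end{equation*}
Since this quantity is a rational integer, its $2$-adic valuation is non-negative (interpreting $\mathrm{ord}_2(0)=\infty$ as usual), which is precisely the claim of \eqref{N_qL(E,1)}.

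There is no real obstacle here: the corollary is an immediate consequence of combining the two pieces of structural data about modular symbols recorded in the proposition and its period-lattice refinement. The only thing to be careful about is bookkeeping — making sure that when specializing the formula \eqref{ms1} to prime-level $m = q$ one correctly identifies the single surviving term on the right-hand side, and that the expression in \eqref{S'm} genuinely yields an integer (which is automatic since the $s_{k,q}$ are defined as integers).
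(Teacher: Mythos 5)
Your proposal is correct and is essentially the paper's own argument: the corollary is stated there as an immediate consequence of combining \eqref{ms1} (specialized to $m=q$, giving $N_q L(E,1) = -\langle q \rangle_{\chi_q^0}$) with \eqref{S'm} (showing $\langle q \rangle_{\chi_q^0}/c_f$ is an integer), exactly as you do. No difference in route or content.
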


\begin{cor}\label{N_qL(E,1)m}
Let $m$ be any integer of the form $m=q_1 q_2 \cdots q_{r(m)}$, with $(m,C)=1$, $r(m) \geq 1$, and $q_1, \ldots, q_{r(m)}$ arbitrary distinct odd primes congruent to $3$ modulo $4$. If $ord_2 (N_{q_i}) = 1$ for any $1 \leq i \leq r(m)$, then we have
$$
ord_2 (N_{q_1} N_{q_2} \cdots N_{q_{r(m)}} L(E,1) / c_f) = ord_2 (\langle m \rangle_{\chi_1} / c_f).
$$
\end{cor}
\begin{proof}
We prove this lemma by induction on $r(m)$, the number of prime factors of $m$. The assertion is obviously true for $r(m)=1$ according to \eqref{ms1}. When $r(m)=2$, say $m=q_1 q_2$, in view of \eqref{NqL(E,1)'}, we have that
\begin{equation}\label{N_q_1_2}
N_{q_1} N_{q_2} L(E,1) =  -\left((1-q_2+N_{q_2})\langle q_1 \rangle_{\chi_1} + (1-q_1+N_{q_1})\langle q_2 \rangle_{\chi_1} + \langle q_1 q_2 \rangle_{\chi_1}\right).
\end{equation}
The assertion for $r(m)=2$ then follows by noting that $1-q_i+N_{q_i} \equiv 0 \mod 4$ and 
$$
ord_2((1-q_2+N_{q_2})\langle q_1 \rangle_{\chi_1} / c_f) = ord_2((1-q_1+N_{q_1})\langle q_2 \rangle_{\chi_1} / c_f) > ord_2(N_{q_1} N_{q_2} L(E,1) / c_f).
$$
Now assume that $r(m) > 2$, and that the lemma is true for all divisors $n>1$ of $m$ with $n \neq m$. We then consider the case $m=q_1 q_2 \cdots q_{r(m)}$. Again by \eqref{NqL(E,1)'}, we have that
\begin{equation}\label{N_q_1_m}
N_{q_1} N_{q_2} \cdots N_{q_{r(m)}} L(E,1) = - \sum_{d=1}^{r(m)-1} \sum_{\substack{n \mid m \\ r(n)=d}} \prod_{q \mid \frac{m}{n}} (1 - q + N_q) \langle n \rangle_{\chi_1} - \langle m \rangle_{\chi_1}.
\end{equation}
By our assumption, and note that $ord_2 (1 - q + N_q) \geq 2$, we then see that
$$
ord_2(\prod_{q \mid \frac{m}{n}} (1 - q + N_q) \langle n \rangle_{\chi_1} / c_f) > ord_2(N_{q_1} N_{q_2} \cdots N_{q_{r(m)}} L(E,1)/ c_f)
$$
holds for all divisors $n>1$ of $m$ with $n \neq m$. Then the assertion for $m=q_1 q_2 \cdots q_{r(m)}$ follows immediately. This completes the proof of the lemma.
\end{proof}

\smallskip

\subsection{Integrality at 2}

For any positive odd square-free integer $m$ with $(m,C)=1$, we write $m=m^{+}m^{-}$, where all the prime factors of $m^{+}$ are congruent to $1$ modulo $4$, and all the prime factors of $m^{-}$ are congruent to $3$ modulo $4$. We have the following result of integrality at $2$, which plays an important role to carry out our induction arguments in the proof of our main theorems.

\begin{prop}\label{Integrality}
Let $E$ be an optimal elliptic curve over $\BQ$ with conductor $C$. Let $m$ be any integer of the form $m=m^{+}m^{-}=q_1 q_2 \cdots q_{r(m)}$ with $(m,C)=1$, and $q_1, \ldots, q_{r(m)}$ arbitrary distinct odd primes. Then we have 
$$ 
\sum_{\substack{d \mid m \\ 2 \mid r(d^{-})}} \langle m \rangle_{\chi_d}/{c_f} = 2^{r(m)-1} \Psi_m; \ \ \ \ \sum_{\substack{d \mid m \\ 2 \nmid r(d^{-})}} \langle m \rangle_{\chi_d}/{i{c^-_f}} = 2^{r(m)-1} \Psi'_m,
$$
where $d=d^{+}d^{-}$, $r(d^{-})$ is the number of prime factors of $d^{-}$, and $\Psi_m, \Psi'_m$ are integers. Moreover, $\Psi_m$ and $\Psi'_m$ are integers of the same parity when $\Delta_E < 0$.
\end{prop}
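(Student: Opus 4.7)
My plan is to interchange the order of summation and reduce the problem to an explicit character sum. Expanding each modular symbol by its definition and swapping, one has
\begin{equation*}
\sum_{\substack{d \mid m \\ 2 \mid r(d^-)}} \langle m \rangle_{\chi_d} = \sum_{k \in (\BZ/m\BZ)^\times} \sigma(k)\, \langle\{0, k/m\}, f\rangle, \qquad \sigma(k) := \sum_{\substack{d \mid m \\ 2 \mid r(d^-)}} \chi_d(k).
\end{equation*}
Factoring $\chi_d = \prod_{q \mid d} \chi_q$ and indexing the divisors of $m = m^+ m^-$ by pairs of subsets of the prime factors of $m^+$ and $m^-$, the standard identity $\sum_{|B|\text{ even}} \prod_{j \in B} x_j = \tfrac{1}{2}[\prod_j(1+x_j) + \prod_j(1-x_j)]$ yields
\begin{equation*}
\sigma(k) = \prod_{q \mid m^+}(1+\chi_q(k))\cdot \tfrac{1}{2}\Bigl[\prod_{q \mid m^-}(1+\chi_q(k)) + \prod_{q \mid m^-}(1-\chi_q(k))\Bigr].
\end{equation*}
This vanishes unless $k$ lies in the set $K$ consisting of those $k$ for which $\chi_q(k) = 1$ for every $q \mid m^+$ and for which $(\chi_q(k))_{q \mid m^-}$ is of a single uniform sign. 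On $K$, $\sigma(k) = 2^{r(m)-1}$ when $m^- \neq 1$, and $\sigma(k) = 2^{r(m)}$ when $m^- = 1$.

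Next I would substitute $\langle\{0, k/m\}, f\rangle = (s_{k,m} c_f + i t_{k,m} c_f^-)/2$ and divide by $c_f$. The crucial observation is that $K$ is stable under the involution $k \mapsto m - k$: since $\chi_q(-1) = 1$ for $q \mid m^+$ and $\chi_q(-1) = -1$ for $q \mid m^-$, the whole family $(\chi_q(k))_{q \mid m^-}$ flips sign simultaneously, preserving the uniform-sign condition while interchanging its two cases. Because $m$ is odd this involution has no fixed points, and the identities $s_{k,m} = s_{m-k,m}$, $t_{k,m} = -t_{m-k,m}$ then imply that the imaginary part of the sum vanishes and that $\sum_{k \in K} s_{k,m}$ is even. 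Combining these with the expression for $\sigma(k)$ yields $\sum \langle m \rangle_{\chi_d}/c_f = 2^{r(m)-1} \Psi_m$ with $\Psi_m := \sum_{k \in K,\, k < m/2} s_{k,m}$ an integer (twice this in the $m^- = 1$ edge case, where $\Psi_m$ is then automatically even).

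The second identity proceeds identically, with the odd-subset counterpart $\sum_{|B|\text{ odd}} \prod x_j = \tfrac{1}{2}[\prod(1+x_j) - \prod(1-x_j)]$ giving $\sigma'(k) = +2^{r(m)-1}$ on $K^-_+ := \{k \in K : \chi_q(k) = 1 \text{ for all } q \mid m^-\}$, $\sigma'(k) = -2^{r(m)-1}$ on its complement $K^-_-$ in $K$, and $\sigma'(k) = 0$ elsewhere. Since $k \mapsto m-k$ bijects $K^-_+$ with $K^-_-$, the $s$-contributions cancel in pairs and the $t$-contributions double, producing an integer $\Psi'_m = \sum_{k \in K^-_+} t_{k,m}$ (with $\Psi'_m = 0$ when $m^- = 1$, by the empty sum convention). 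For the parity claim under $\Delta_E < 0$, the hypothesis $s_{k,m} \equiv t_{k,m} \pmod 2$ (noted right before \eqref{S'm}) lets me reduce both $\Psi_m$ and $\Psi'_m$ modulo $2$ to sums of $t_{k,m}$; splitting $K^-_+ = A \sqcup B$ with $A = K^-_+ \cap \{k < m/2\}$ and using $K^-_- \cap \{k < m/2\} = \{m - k : k \in B\}$, a short computation yields $\Psi_m \equiv \sum_A t_{k,m} - \sum_B t_{k,m}$ and $\Psi'_m \equiv \sum_A t_{k,m} + \sum_B t_{k,m} \pmod 2$, whence $\Psi_m \equiv \Psi'_m \pmod 2$.

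The main obstacle I anticipate is the separate bookkeeping for the edge case $m^- = 1$, where $\sigma(k)$ picks up an extra factor of $2$, the sets $K^-_\pm$ degenerate, and $\Psi'_m$ vanishes; apart from this, the proof reduces to the character-sum computation above combined with the $k \mapsto m - k$ symmetry on modular symbols.
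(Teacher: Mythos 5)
Your proof is correct, and your $\Psi_m,\Psi'_m$ in fact coincide with the paper's, but your route is more self-contained than the one in the paper. The paper never evaluates the restricted character sums $\sigma(k),\sigma'(k)$: instead it first shows that the even and odd half-sums are separately integral multiples of $c_f$ and of $i c_f^-$, by rewriting each $\langle m\rangle_{\chi_d}$ with $1<d<m$ through the pullback relation \eqref{T'_{d,m}} (quoted from Cai--Li--Zhai) and then invoking the integrality statements \eqref{S'm}, \eqref{Tm}, \eqref{Tm-2}; only the full sum $\sum_{d\mid m}\langle m\rangle_{\chi_d}$ is computed by interchanging summation, where $\sum_{d\mid m}\chi_d(k)=\prod_{q\mid m}(1+\chi_q(k))$ localizes $k$ to your set $K^-_+$ with the factor $2^{r(m)}$, and comparing real and imaginary parts then identifies $\Psi_m=\sum s_{k,m}$ and $\Psi'_m=\sum t_{k,m}$ over $K^-_+$, from which the parity claim is immediate. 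You instead evaluate each half-sum directly via the even/odd-subset identity and extract both the power of $2$ and the real/imaginary separation from the fixed-point-free involution $k\mapsto m-k$ on the uniform-sign set $K$, using only \eqref{k/mf} and the conjugation symmetry of $\langle\{0,k/m\},f\rangle$. What this buys is independence from \eqref{T'_{d,m}} and from \eqref{S'm}--\eqref{Tm-2}, making the proposition essentially a statement about character sums and the period lattice alone; the price is the separate bookkeeping in the degenerate case $m^-=1$ (where your $\sigma(k)=2^{r(m)}$ and $\Psi'_m=0$), which the paper's uniform treatment of the total sum avoids. Both arguments are valid, and your parity computation, though phrased with an unnecessary sign, reduces mod $2$ to the same identity $\Psi_m\equiv\Psi'_m$ as in the paper.
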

\begin{proof}
By \eqref{T'_{d,m}}, \eqref{S'm}, \eqref{Tm} and \eqref{Tm-2}, we have
\begin{equation}\label{T'_{d,m}even}
\sum_{\substack{d \mid m \\ 2 \mid r(d^{-})}} \langle m \rangle_{\chi_d} = \langle m \rangle_{\chi_1} + \sum_{\substack{d \mid m, 2 \mid r(d^{-}) \\ 1< d < m}} \ \prod_{q \mid \frac{m}{d}}(a_q - 2 \chi_d(q)) \cdot \langle d \rangle_{\chi_d} + \frac{1+(-1)^{r(m^{-})}}{2} \, \langle m \rangle_{\chi_m} = s \cdot c_f; 
\end{equation}
\begin{equation}\label{T'_{d,m}odd}
\sum_{\substack{d \mid m \\ 2 \nmid r(d^{-})}} \langle m \rangle_{\chi_d} = \sum_{\substack{d \mid m, 2 \nmid r(d^{-}) \\ 1< d < m}} \ \prod_{q \mid \frac{m}{d}}(a_q - 2 \chi_d(q)) \cdot \langle d \rangle_{\chi_d} + \frac{1-(-1)^{r(m^{-})}}{2} \, \langle m \rangle_{\chi_m} = t \cdot i{c^-_f},
\end{equation}
where $s,t$ are integers.

On the other hand, by the definition of $\langle m \rangle_{\chi_d}$ and in view of  \eqref{k/mf}, we have 
\begin{equation}\label{T'_{d,m}_2}
\sum_{d \mid m} \langle m \rangle_{\chi_d} = 2^{r(m)} \mathop{{\sum}^*}_{k \in (\BZ/m\BZ)^{\times}} \langle \{0,\frac{k}{m}\}, f \rangle = 2^{r(m)-1}\mathop{{\sum}^*}_{k \in (\BZ/m\BZ)^{\times}} (s_{k,m} c_f + i t_{k,m} {c^-_f}),
\end{equation}
where $\mathop{{\sum}^*}$ means that $k$ runs over all the elements of $(\BZ/m\BZ)^{\times}$ such that $\chi_{q_i}(k)=1$ for all $1 \leq i \leq r(m)$. Then combining \eqref{T'_{d,m}_2} with \eqref{T'_{d,m}even} and \eqref{T'_{d,m}odd}, it follows that 
$$
s = 2^{r(m)-1} \Psi_m := 2^{r(m)-1} \mathop{{\sum}^*}_{k \in (\BZ/m\BZ)^{\times}} s_{k,m};
$$
$$
t = 2^{r(m)-1} \Psi'_m := 2^{r(m)-1} \mathop{{\sum}^*}_{k \in (\BZ/m\BZ)^{\times}} t_{k,m}.
$$
The last assertion of the proposition holds since $s_{k,m}, t_{k,m}$ are integers of the same parity when $\Delta_E < 0$. This completes the proof of the proposition.
\end{proof}

\medskip

\section{Lower Bound}

In order to prove our main theorem, we should understand the behaviour of the Tamagawa factors under twisting, for example, one can see \cite[\S7]{Coates1}). Recall that $c_q(E^{(M)})$ denotes the Tamagawa factor at the prime $q \mid M$. Since $(q,2C)=1$, reduction modulo $q$ on $E$ gives an isomorphism $E(\BQ_q)[2] \cong E(\BF_q)[2]$. Hence, we have 
$$
ord_2(c_q(E^{(M)})) = ord_2(\# E^{(M)}(\BQ_q)[2]) = ord_2(\# E(\BQ_q)[2]) = ord_2(\# E(\BF_q)[2])
$$
by \cite[Lemma 37]{Coates1}. Recall that $ord_2(|E(\BF_q)|)=ord_2(N_q)$. It follows that $2 \mid c_q(E^{(M)})$ if and only if $2 \mid N_q$. Hence, we have   
$$
t(m):=t_E(M)=\sum_{\substack{q \mid m \\ 2 \mid c_q(E^{(M)})}} 1=\sum_{\substack{q \mid m \\ 2 \mid N_q}} 1,
$$
where $m=|M|$. It is plain that 
$$
t(m)=t(m_1)+t(m_2)
$$ 
for $m=m_1 m_2$, and that $t(m)=r(m)$ if $|E(\BQ)_{\tor}|$ is even.

\begin{lem}\label{ord_2 S'_m'1}
Let $E$ be the optimal elliptic curve over $\BQ$ attached to $f$. Let $m=q_1 q_2 \cdots q_{r(m)}$ be a product of $r(m)$ distinct odd primes with $(m,C)=1$, $r(m) \geq 1$. We have
$$
ord_2 (\langle m \rangle_{\chi_1} / c_f) \geq 
\left\{
\begin{array}{ll}
r(m)-1             & \hbox{if $|E(\BQ)_{\tor}|$ is even;} \\
t(m)                & \hbox{if $|E(\BQ)_{\tor}|$ is odd.}
\end{array}
\right.
$$
\end{lem}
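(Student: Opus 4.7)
The plan is to prove the lemma by induction on $r(m)$, using \eqref{NqL(E,1)} rearranged to isolate $\langle m\rangle_{\chi_m^0}$. The starting observation, obtained by specialising \eqref{ms1} to $m=q$, is that
$$
N_q L(E,1) = -\langle q\rangle_{\chi_q^0},
$$
which when combined with \eqref{S'm} forces $\langle q\rangle_{\chi_q^0}/c_f = -N_q L(E,1)/c_f \in\BZ$ for every odd prime $q$ of good reduction. This single integrality statement secures the base case $r(m)=1$ when $|E(\BQ)_{\tor}|$ is even, and supplies the input $N_q L(E,1)/c_f\in\BZ$ that is used throughout.

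For the inductive step I would rewrite \eqref{NqL(E,1)} as
\begin{equation*}
\frac{\langle m\rangle_{\chi_m^0}}{c_f}= (-1)^{r(m)}\prod_{i=1}^{r(m)}N_{q_i}\cdot\frac{L(E,1)}{c_f} - \sum_{\substack{d\mid m\\ 1\leq r(d)<r(m)}}\prod_{q\mid m/d}(1-q)\cdot\frac{\langle d\rangle_{\chi_d^0}}{c_f},
\end{equation*}
and estimate each piece. When $|E(\BQ)_{\tor}|$ is even, $E(\BQ)[2]$ injects into $E(\BF_q)[2]$ for every odd good prime $q$, so $2\mid N_q$ for all such $q$. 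Rewriting the main term as $\prod_{i<r(m)} N_{q_i}\cdot(N_{q_{r(m)}}L(E,1)/c_f)$ makes it an integer of $2$-adic valuation at least $r(m)-1$, while each remaining summand contributes at least $r(m/d)+(r(d)-1)=r(m)-1$, using the induction hypothesis on $\langle d\rangle_{\chi_d^0}/c_f$ together with the fact that every factor $1-q$ is even.

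When $|E(\BQ)_{\tor}|$ is odd (so that $E(\BQ)[2]=0$) the subtlety is that $2\nmid N_q$ in general, and the key auxiliary step is to strengthen the integrality to $ord_2(L(E,1)/c_f)\geq 0$. This I would obtain by invoking Corollary \ref{cor3}(i) to produce an odd good prime $q$ with $2\nmid N_q$; the relation $N_q L(E,1)/c_f\in\BZ$ then forces $L(E,1)/c_f\in\BZ_{(2)}$ because $N_q$ is now a $2$-adic unit. Granted this, the main term satisfies $ord_2\geq\sum_i ord_2(N_{q_i})\geq t(m)$ by the very definition of $t$, and each summand contributes $ord_2\geq r(m/d)+t(d)\geq t(m/d)+t(d)=t(m)$ by induction and the trivial inequality $r(n)\geq t(n)$, completing the induction.

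The induction itself is really just bookkeeping; the only genuine obstacle is the odd-torsion auxiliary bound $ord_2(L(E,1)/c_f)\geq 0$, which is what makes essential use of Corollary \ref{cor3}(i) to convert the modular-symbol integrality $\langle q\rangle_{\chi_q^0}/c_f\in\BZ$ into a bound on $L(E,1)/c_f$ alone.
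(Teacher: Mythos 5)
Your proposal is correct and takes essentially the same route as the paper's own proof: the base case via the integrality $ord_2(N_q L(E,1)/c_f)\geq 0$ coming from \eqref{ms1} and \eqref{S'm}, the auxiliary bound $ord_2(L(E,1)/c_f)\geq 0$ in the odd-torsion case obtained from an odd good prime with $2\nmid N_p$ (which the paper asserts exactly as you do, via the density statement behind Corollary \ref{cor3}), and the same induction on $r(m)$ using \eqref{NqL(E,1)} with identical valuation estimates for the main term and the proper-divisor terms. No gaps to report.
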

\begin{proof}
When $r(m)=1$, in view of \eqref{N_qL(E,1)}, the lemma holds obviously if $|E(\BQ)_{\tor}|$ is even. If $|E(\BQ)_{\tor}|$ is odd, there exists a prime $p$ coprime to $2C$ such that $N_p$ is odd, so $ord_2(L(E,1) / {c_f}) \geq 0$. It follows that 
$$
ord_2 (\langle q \rangle_{\chi_1} / c_f) \geq ord_2(N_q) \geq t(q).
$$
We now prove the lemma by induction on $r(m)$. Assume $r(m) > 1$, and that the lemma is true for all divisors $n>1$ of $m$ with $n \neq m$. For $m=q_1 q_2 \cdots q_{r(m)}$, in view of \eqref{NqL(E,1)}, we have that
$$
\langle m \rangle_{\chi_1} = (-1)^{r(m)} N_{q_1} N_{q_2} \cdots N_{q_{r(m)}} L(E,1) - \sum_{j=1}^{r(m)-1} \sum_{\substack{n \mid m \\ r(n)=j}} \prod_{q \mid \frac{m}{n}} (1 - q) \langle n \rangle_{\chi_1}.
$$
By our assumption, we see that
$$
ord_2(\prod_{q \mid \frac{m}{n}} (1 - q) \langle n \rangle_{\chi_1} / c_f) \geq 
\left\{
\begin{array}{ll}
r(\frac{m}{n})+r(n)-1=r(m)-1             & \hbox{if $|E(\BQ)_{\tor}|$ is even;} \\
t(\frac{m}{n})+t(n)=t(m)                   & \hbox{if $|E(\BQ)_{\tor}|$ is odd.}
\end{array}
\right.
$$
Same result holds for $ord_2(N_{q_1} N_{q_2} \cdots N_{q_{r(m)}} L(E,1) / c_f)$. Hence, the assertion for $m=q_1 q_2 \cdots q_{r(m)}$ follows. This completes the proof of the lemma.
\end{proof}

\begin{thm}[Theorem \ref{ThmLowerBound}]\label{LowerBound-1}
Let $E$ be an optimal elliptic curve over $\BQ$ with conductor $C$. Let $M=\epsilon q_1 q_2 \cdots q_r$ be a product of $r$ odd distinct primes with $(M,C)=1$, where the sign $\epsilon = \pm 1$ is chosen so that $M \equiv 1 \mod 4$. We then have
$$
ord_2(L(E^{(M)},1)/c_\infty(E^{(M)})) \geq t_E(M)-1-ord_2(\nu_E). 
$$
\end{thm}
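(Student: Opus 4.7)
The plan is to reduce the inequality to a $2$-adic lower bound on the twisted modular symbol $\langle m\rangle_{\chi_M}$ with $m:=|M|$, and then prove that lower bound by induction on $r:=r(m)$ using Proposition \ref{Integrality}, the recursion \eqref{T'_{d,m}}, and Lemma 4.1.

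First I would identify $L(E^{(M)},1)$ with $L(E,\chi_M,1)$, where $\chi_M$ is the primitive quadratic character of conductor $m$ attached to $\BQ(\sqrt{M})/\BQ$ (which is primitive precisely because $M\equiv 1 \mod 4$). Combining \eqref{ms2} with the Gauss-sum evaluation $g(\bar\chi_M)=\sqrt{m}$ for $M>0$ (resp.\ $-i\sqrt{m}$ for $M<0$), with the standard period identity $\Omega_{E^{(M)}}^{+}\sqrt{m}=\Omega_E^{+}$ (resp.\ $\Omega_E^{-}$), which holds with no $2$-adic correction since $(M,2C)=1$ ensures that the naive quadratic twist of a global minimal model is already minimal at every prime, and with the definition $c_f=c_\infty(E)/\nu_E$, one obtains
$$
ord_2\!\left(L(E^{(M)},1)/c_\infty(E^{(M)})\right) \;=\; ord_2\!\left(\langle m\rangle_{\chi_M}/c_f\right)-ord_2(\nu_E)
$$
when $M>0$, with $ic_f^-$ in place of $c_f$ when $M<0$. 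It therefore suffices to prove
$$
ord_2(\langle m\rangle_{\chi_M}/c_f)\;\ge\;t(m)-1\qquad (m\equiv 1\!\!\mod 4),
$$
and the parallel bound $ord_2(\langle m\rangle_{\chi_M}/(ic_f^-))\ge t(m)-1$ when $m\equiv 3\!\!\mod 4$.

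The case $r=1$ is immediate from Proposition \ref{Integrality}: for $m=q$ prime with $q\equiv 1\!\!\mod 4$ it reads $(\langle q\rangle_{\chi_q^0}+\langle q\rangle_{\chi_q})/c_f\in\BZ$, and Lemma 4.1 gives $ord_2(\langle q\rangle_{\chi_q^0}/c_f)\ge 0$, whence $ord_2(\langle q\rangle_{\chi_q}/c_f)\ge 0\ge t(q)-1$; the case $q\equiv 3\!\!\mod 4$ is analogous using the second identity of Proposition \ref{Integrality}. For the induction step with $r\ge 2$, the appropriate identity of Proposition \ref{Integrality} (matching the parity of $r(m^-)$) expresses $\langle m\rangle_{\chi_m}/c_f$ (resp.\ $\langle m\rangle_{\chi_m}/ic_f^-$) as
$$
2^{r-1}\Psi_m \;-\!\!\!\sum_{\substack{d\mid m,\;d<m\\ 2\mid r(d^-)}}\!\!\!\langle m\rangle_{\chi_d}/c_f,
$$
and I would bound each omitted term as follows. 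For $1<d<m$, the recursion \eqref{T'_{d,m}} gives $\langle m\rangle_{\chi_d}=\prod_{q\mid m/d}(a_q-2\chi_d(q))\cdot\langle d\rangle_{\chi_d}$; since each $q$ is odd, $a_q\equiv N_q\pmod 2$, so $ord_2(a_q-2\chi_d(q))\ge t(q)$, and combined with the inductive hypothesis on $\langle d\rangle_{\chi_d}/c_f$ this term has $ord_2\ge t(m/d)+t(d)-1=t(m)-1$. For $d=1$, $\langle m\rangle_{\chi_1}=\langle m\rangle_{\chi_m^0}$ and Lemma 4.1 gives $ord_2\ge r-1\ge t(m)-1$, regardless of the parity of $|E(\BQ)_{\tor}|$. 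Finally $ord_2(2^{r-1}\Psi_m)\ge r-1\ge t(m)-1$, so solving for $\langle m\rangle_{\chi_m}/c_f$ yields the desired bound, and the induction is complete.

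The delicate part is the first step: producing precisely the factor $\nu_E^{-1}$ and no unexpected $2$-adic discrepancy from the combined effect of the Gauss sum $g(\bar\chi_M)$, the period comparison between $E$ and $E^{(M)}$, and the possible discrepancy between $\delta_E$ and $\delta_{E^{(M)}}$ (which can differ when $\Delta_E$ and $\Delta_{E^{(M)}}$ have opposite signs, i.e.\ when $M<0$). Once minimality at $2$ is guaranteed by $(M,2C)=1$, these discrepancies should be absorbed cleanly into $\nu_E$ and the choice of $c_f$ versus $ic_f^-$; the induction itself is then a mechanical consequence of Proposition \ref{Integrality}, the congruence $a_q\equiv N_q\pmod 2$, and Lemma 4.1.
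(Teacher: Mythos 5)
Your proposal follows essentially the same route as the paper's proof: the same reduction of $ord_2(L(E^{(M)},1)/c_\infty(E^{(M)}))$ to $ord_2(\langle m\rangle_{\chi_m}/c^{\pm}_f)-ord_2(\nu_E)$, then the same induction on $r(m)$ using Proposition \ref{Integrality}, the recursion \eqref{T'_{d,m}} (via $a_q\equiv N_q \bmod 2$), and Lemma \ref{ord_2 S'_m'1}, with identical base case and inductive step. One cosmetic slip: when $|E(\BQ)_{\tor}|$ is odd, Lemma \ref{ord_2 S'_m'1} gives $ord_2(\langle m\rangle_{\chi_m^0}/c_f)\geq t(m)$ rather than $\geq r(m)-1$, but this still yields the bound $t(m)-1$ that you actually use, so the argument is unaffected.
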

\begin{proof}
We denote $m=m^{+}m^{-}=q_1 q_2 \cdots q_{r(m)}$, and 
$$
c^{\pm}_\infty(E)=
\left\{
\begin{array}{ll}
c_\infty(E)             & \hbox{if $m \equiv 1 \mod 4$;} \\
ic^-_\infty(E)         & \hbox{if $m \equiv 3 \mod 4$,}
\end{array}
\right.
\text{ and \ \ }
c^{\pm}_f=
\left\{
\begin{array}{ll}
c_f             & \hbox{if $m \equiv 1 \mod 4$;} \\
ic^-_f         & \hbox{if $m \equiv 3 \mod 4$.}
\end{array}
\right.
$$
Since 
$$
ord_2(L(E^{(M)},1)/c_\infty(E^{(M)})) = ord_2(\sqrt{M} L(E^{(M)},1) / c^{\pm}_\infty(E)) = ord_2 (\langle m \rangle_{\chi_m}/c^{\pm}_f) - ord_2(\nu_E),
$$
we only need to work on $\langle m \rangle_{\chi_m}/c^{\pm}_f$. We shall prove this lemma by induction on $r(m)$. 

When $r(m)=1$, say $m=q$, Proposition \ref{Integrality} shows that 
$$ 
(\langle q \rangle_{\chi_1}+\langle q \rangle_{\chi_q})/{c_f} = \Psi_q \ (\text{if } q \equiv 1 \mod 4);
$$
$$
\langle q \rangle_{\chi_q}/i{c^-_f} = \Psi'_q \ (\text{if } q \equiv 3 \mod 4), 
$$
then the lemma follows immediately by Lemma \ref{ord_2 S'_m'1}. 

Now assume that $r(M) > 1$, and that the theorem is true for all divisors $n \equiv 1 \mod 4$ of $M$ with $n \neq M$, so we have that 
$$
ord_2(\langle d \rangle_{\chi_d}/c^{\pm}_f) \geq t(d) -1,
$$ 
where $d \mid M$ and $1 < d < q_1 q_2 \cdots q_r:=m$. Again by Proposition \ref{Integrality}, we have 
$$
\langle m \rangle_{\chi_1} + \sum_{\substack{d \mid m, 2 \mid r(d^{-}) \\ 1< d < m}} \ \prod_{q \mid \frac{m}{d}}(a_q - 2 \chi_d(q)) \cdot \langle d \rangle_{\chi_d} + \langle m \rangle_{\chi_m} = 2^{r-1} \Psi_m \cdot c_f \ \ (\text{if } 2 \mid r(m^-));
$$
$$
\sum_{\substack{d \mid m, 2 \nmid r(d^{-}) \\ 1< d < m}} \ \prod_{q \mid \frac{m}{d}}(a_q - 2 \chi_d(q)) \cdot \langle d \rangle_{\chi_d} + \langle m \rangle_{\chi_m} = 2^{r-1} \Psi'_m \cdot i{c^-_f} \ \ (\text{if } 2 \nmid r(m^-)).
$$
Note that 
$$
\ord_2 (\prod_{q \mid \frac{m}{d}}(a_q - 2 \chi_d(q)) \cdot \langle d \rangle_{\chi_d}/c^{\pm}_f) \geq t(\frac{m}{d})+t(d)-1=t(m)-1,
$$
and in view of the above two equations, it follows that 
$$
ord_2(\langle m \rangle_{\chi_m}/c^{\pm}_f) \geq t(m)-1
$$ 
again by Lemma \ref{ord_2 S'_m'1}. This completes the proof the theorem.
\end{proof}

\medskip

\section{Non-vanishing result}

In this section, we shall verify Conjecture \ref{Conj} for elliptic curves with only one non-trivial rational $2$-torsion point. In particular, we will show that in some sub-families of quadratic twists of certain elliptic curves, there exists an explicit infinite family of quadratic twists with analytic rank $0$. Of course, we can use the precise $L$-values to verify the $2$-part of the Birch and Swinnerton-Dyer conjecture, combining with results given by the classical $2$-descents, for example, in \cite[Chapter \uppercase\expandafter{\romannumeral10}]{Silverman}. We first prove the following lemma, which admits stronger lower bound than the conclusion in Lemma \ref{ord_2 S'_m'1}.

\begin{lem}\label{ord_2 S'_m'}
Let $E$ be the optimal elliptic curve over $\BQ$ with analytic rank zero attached to $f$. Let $m$ be any integer of the form $m=m^{+}m^{-}=q_1 q_2 \cdots q_{r(m)}$, with $(m,C)=1$, $r(m^{+}) \geq 1$, $r(m^{-}) \geq 1$, and $q_1, \ldots, q_{r(m)}$ arbitrary distinct odd primes. If $ord_2 (N_{q_i}) = 1$ for any $1 \leq i \leq r(m)$, then we have
$$
ord_2 (\langle m \rangle_{\chi_1} / c_f) > r(m)-1.
$$
\end{lem}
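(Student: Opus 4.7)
The plan is to derive an explicit closed-form identity for $\langle m\rangle_{\chi_m^0}$ as a rational multiple of $L(E,1)$, and then extract the $2$-adic valuation from it. Concretely, I will show by strong induction on $r(m)$ that for every positive square-free odd integer $m$ with $(m,C)=1$,
\[
\langle m\rangle_{\chi_m^0}=h_m\cdot L(E,1),\qquad h_m:=\prod_{q\mid m}(a_q-2)-\prod_{q\mid m}(q-1).
\]
The base case $r(m)=1$ is just $\langle q\rangle_{\chi_q^0}=-N_q L(E,1)$, which follows from \eqref{ms1} together with $(a_q-2)-(q-1)=a_q-q-1=-N_q$. For the inductive step, substitute the inductive formula for $\langle d\rangle_{\chi_d^0}$ ($1\le r(d)<r(m)$) into the right-hand side of \eqref{NqL(E,1)} and invoke the two elementary telescoping identities
\[
\sum_{d\mid m}\prod_{q\mid m/d}(1-q)\prod_{q\mid d}(a_q-2)=\prod_{q\mid m}\bigl[(1-q)+(a_q-2)\bigr]=(-1)^{r(m)}\prod_{q\mid m}N_q,
\]
\[
\sum_{d\mid m}\prod_{q\mid m/d}(1-q)\prod_{q\mid d}(q-1)=\prod_{q\mid m}\bigl[(1-q)+(q-1)\bigr]=0.
\]
Subtracting these isolates the $d=m$ contribution (the $d=1$ term vanishes because $h_1=0$) and delivers the claimed formula.

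Next comes the $2$-adic arithmetic. Writing $N_q=2u$ with $u$ odd (forced by $ord_2(N_q)=1$) and using $a_q=q+1-N_q$, a case check gives: if $q\equiv 1\bmod 4$ then $a_q\equiv 0\bmod 4$, so $ord_2(a_q-2)=1$ while $ord_2(q-1)\ge 2$; if $q\equiv 3\bmod 4$ then $a_q\equiv 2\bmod 4$, so $ord_2(a_q-2)\ge 2$ while $ord_2(q-1)=1$. Multiplying over the primes of $m$ yields
\[
ord_2\Bigl(\prod_{q\mid m}(a_q-2)\Bigr)\ge r(m^+)+2r(m^-),\qquad ord_2\Bigl(\prod_{q\mid m}(q-1)\Bigr)\ge 2r(m^+)+r(m^-).
\]
Both lower bounds equal $r(m)+\min(r(m^+),r(m^-))\ge r(m)+1$ under the hypothesis $r(m^\pm)\ge 1$, so $ord_2(h_m)\ge r(m)+1$.

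Finally, applying Lemma~\ref{ord_2 S'_m'1} to the single prime $q_1$ gives $ord_2(\langle q_1\rangle_{\chi_{q_1}^0}/c_f)\ge 0$, and since $\langle q_1\rangle_{\chi_{q_1}^0}=-N_{q_1}L(E,1)$ with $ord_2(N_{q_1})=1$, we obtain $ord_2(L(E,1)/c_f)\ge -1$. Putting everything together,
\[
ord_2(\langle m\rangle_{\chi_m^0}/c_f)=ord_2(h_m)+ord_2(L(E,1)/c_f)\ge(r(m)+1)-1=r(m),
\]
which is strictly greater than $r(m)-1$, as required. The main obstacle is recognizing the clean closed form for $h_m$; once guessed, the inductive verification is a short combinatorial check. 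The hypothesis $r(m^\pm)\ge 1$ is precisely what delivers the decisive extra factor of $2$ in $h_m$; in its absence (that is, if $m$ is entirely in $m^+$ or $m^-$) one of the two products has $2$-adic valuation exactly $r(m)$ and no improvement over Lemma~\ref{ord_2 S'_m'1} is possible.
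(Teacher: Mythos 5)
Your argument is correct, and it takes a genuinely different route from the paper's. You first establish the exact identity $\langle m\rangle_{\chi_m^0}=h_m\,L(E,1)$ with $h_m=\prod_{q\mid m}(a_q-2)-\prod_{q\mid m}(q-1)$, by induction from the base case $\langle q\rangle_{\chi_q^0}=-N_q\,L(E,1)$ (which is \eqref{ms1} for $m=q$) together with the recursion \eqref{NqL(E,1)}; the two telescoping identities and the vanishing of the $d=1$ term check out, so the identity holds for every odd square-free $m$ prime to $C$, with no restriction on $r(m^{\pm})$. The $2$-adic step is then a direct congruence count: $ord_2(N_q)=1$ forces $a_q\equiv 0 \bmod 4$ when $q\equiv 1\bmod 4$ and $a_q\equiv 2\bmod 4$ when $q\equiv 3\bmod 4$, giving $ord_2(h_m)\ge r(m)+1$ as soon as $r(m^+),r(m^-)\ge 1$, while \eqref{N_qL(E,1)} and $ord_2(N_{q_1})=1$ give $ord_2(L(E,1)/c_f)\ge -1$, whence $ord_2(\langle m\rangle_{\chi_m^0}/c_f)\ge r(m)>r(m)-1$. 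The paper instead runs the induction directly on $2$-adic valuations of the terms of \eqref{NqL(E,1)}: it isolates the $n=m^+$ term, whose valuation equals $ord_2(\prod_{q\mid m}N_q\,L(E,1)/c_f)$ by the exact-valuation results of \cite[Lemma 2.2]{Cai} and \cite[Lemma 2.2]{Zhai3}, and shows every term with $n\neq m^+$ has strictly larger valuation. Your closed form buys self-containedness (it even recovers those cited equalities: for $m=m^+$ or $m=m^-$ one of the two products dominates and $ord_2(h_m)=r(m)$ exactly), at the cost of having to guess the formula; the paper's term-by-term comparison yields the slightly sharper conclusion $ord_2(\langle m\rangle_{\chi_m^0}/c_f)>ord_2(\prod_{q\mid m}N_q\,L(E,1)/c_f)$, though in the stated lemma and in its application in Theorem \ref{MainThm-1} (where $ord_2(L(E,1)/c_\infty(E))=-1$) the two bounds coincide. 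One cosmetic slip: your two products have valuations at least $r(m)+r(m^-)$ and $r(m)+r(m^+)$ respectively, not both equal to $r(m)+\min(r(m^+),r(m^-))$; the inequality $\ge r(m)+1$ that you actually use is unaffected.
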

\begin{proof}
We first prove the case when $r(m)=2$, say $m=m^{+}m^{-}=q_1 q_2$ with $q_1 \equiv 1 \mod 4$, $q_2 \equiv 3 \mod 4$. By \eqref{NqL(E,1)}, we have that
$$
N_{q_1} N_{q_2} L(E,1) =  (1 - q_2) \langle q_1 \rangle_{\chi_1} + (1 - q_1) \langle q_2 \rangle_{\chi_1} + \langle q_1 q_2 \rangle_{\chi_1}. 
$$
In view of \eqref{ms1}, we have 
$$
ord_2 (S_{q_i}/c_f)=ord_2(N_{q_i} L(E,1)/c_f) \ \ (i=1,2),
$$
it follows that 
$$
ord_2 (N_{q_1} N_{q_2} L(E,1)/c_f) = ord_2 ((1 - q_2) \langle q_1 \rangle_{\chi_1}/c_f) < ord_2 ((1 - q_1) \langle q_2 \rangle_{\chi_1}/c_f).
$$
Thus, we must have 
$$
ord_2 (\langle q_1 q_2 \rangle_{\chi_1}/c_f) > ord_2 (N_{q_1} N_{q_2} L(E,1)/c_f).
$$

We now prove this lemma by induction on $r(m)$. Assuming that $r(m) > 2$, and that the lemma is true for all divisors $n=n^{+}n^{-}>1$ of $m$ with $r(n^{+}) \geq 1$, $r(n^{-}) \geq 1$, and $n \neq m$. Then, for $m=q_1 q_2 \cdots q_{r(m)}$, we have 
\begin{equation}\label{eq211}
\prod_{q \mid m} N_q \cdot L(E,1) = (-1)^{r(m)} (\prod_{q \mid m^{-}} (1 - q) \cdot \langle m^{+} \rangle_{\chi_1} + \sum_{j=1}^{r(m)-1} \sum_{\substack{n \mid m \\ n \neq m^{+} \\ r(n)=j}} \prod_{q \mid \frac{m}{n}} (1 - q) \cdot \langle n \rangle_{\chi_1} + \langle m \rangle_{\chi_1})
\end{equation}
again by \eqref{NqL(E,1)}. Since 
$$
ord_2(\langle m^{+} \rangle_{\chi_1}/c_f) = ord_2 (\prod_{q \mid m^{+}} N_q \cdot L(E,1)/c_f))
$$
by \cite[Lemma 2.2]{Cai} and $ord_2 (1-q) = ord_2 (N_q)$ for $q \mid m^{-}$, it follows that 
\begin{equation}\label{eq212}
ord_2 (\prod_{q \mid m^{-}} (1 - q) \cdot \langle m^{+} \rangle_{\chi_1}/c_f) = ord_2 (\prod_{q \mid m} N_q \cdot L(E,1)/c_f)).
\end{equation}
However, when $n \mid m$ and $n \neq m^{+}$, we claim
\begin{equation}\label{eq213}
ord_2 (\prod_{q \mid \frac{m}{n}} (1 - q) \cdot \langle n \rangle_{\chi_1}/c_f) > ord_2 (\prod_{q \mid m} N_q \cdot L(E,1)/c_f)).
\end{equation}
Indeed, when $m^{+} \mid n \neq m^{+}$, we have 
$$
ord_2 (\langle n \rangle_{\chi_1}/c_f) > ord_2 (\prod_{q \mid n} N_q \cdot L(E,1)/c_f)
$$
by our assumption, the claim then follows immediately since $ord_2 (1-q) = ord_2 (N_q)$ for $q \mid \frac{m}{n}$. When $m^{+} \nmid n$, we have 
$$
ord_2 (\langle n \rangle_{\chi_1}/c_f) \geq ord_2 (\prod_{q \mid n} N_q \cdot L(E,1)/c_f)
$$
by our assumption, and by \cite[Lemma 2.2]{Cai} and Corollary \ref{N_qL(E,1)m}, where the equality holds when $n \mid m^{+}$ or $n \mid m^{-}$. Since $m^{+} \nmid n$, there must exist at least one prime factor of $\frac{m}{n}$ which is congruent to $1$ modulo $4$, it follows that 
$$
ord_2 (\prod_{q \mid \frac{m}{n}} (1-q)) > ord_2 (\prod_{q \mid \frac{m}{n}} N_q).
$$ 
Thus, the claim is true for both cases. In view of \eqref{eq211}, and combining with \eqref{eq212} and \eqref{eq213}, it easily follows that 
$$
ord_2 (\langle m \rangle_{\chi_1} / c_f) > ord_2 (\prod_{q \mid m} N_q \cdot L(E,1) / c_f) \geq r(m)-1.
$$
This completes the proof of the lemma. 
\end{proof}

When $E$ has only one rational $2$-torsion point and no rational cyclic $4$-isogeny, recall that, if $\Delta_E<0$, $\mathcal{S}$ is an infinite set of odd primes
$$
\mathcal{S}=\{q \ | \ ord_2(|E(\BF_q)|)=ord_2(|E(\BQ)[2]|)\},
$$
whence we have $ord_2 (|E(\BF_q)|) = ord_2 (N_q) = 1$ for any $q \in \mathcal{S}$.
\begin{thm}\label{MainThm-1}
Let $E$ be an optimal elliptic curve over $\BQ$ with conductor $C$. Assume that 
\begin{enumerate}
  \item $\Delta_E < 0$;
  \item $E(\BQ)[2] \cong \BZ/2\BZ$;
  \item $E$ has odd Manin constant;
  \item $ord_2 (L(E,1)/c_\infty(E))=-1$.
\end{enumerate}
Let $M=\epsilon q_1 q_2 \cdots q_r$ be a product of $r$ distinct primes in $\mathcal{S}$, where the sign $\epsilon = \pm 1$ is chosen so that $M \equiv 1 \mod 4$. Then $L(E^{(M)},1) \neq 0$, and we have
$$
ord_2(L(E^{(M)},1)/c_\infty(E^{(M)}))=r-1.
$$
In particular, $E^{(M)}(\BQ)$ and $\Sha(E^{(M)})$ are both finite.
\end{thm}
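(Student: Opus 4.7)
The plan is to reduce the statement to the exact identity $\mathrm{ord}_2(\langle m\rangle_{\chi_m}/c_f^\pm)=r-1$, where $m=|M|$ and $c_f^\pm$ is $c_f$ or $ic_f^-$ depending on whether $m\equiv 1$ or $3\bmod 4$. The relation $\mathrm{ord}_2(L(E^{(M)},1)/c_\infty(E^{(M)}))=\mathrm{ord}_2(\langle m\rangle_{\chi_m}/c_f^\pm)-\mathrm{ord}_2(\nu_E)$ used in the proof of Theorem~\ref{LowerBound-1}, together with hypothesis (3) that $\nu_E$ is odd, makes this reduction clean. For the lower bound $\geq r-1$, I would simply invoke Theorem~\ref{LowerBound-1}: since each $q_i\in\mathcal{S}$ satisfies $\mathrm{ord}_2(N_{q_i})=1=\mathrm{ord}_2(|E(\BQ)[2]|)$, and Tamagawa numbers satisfy $\mathrm{ord}_2(c_{q_i}(E^{(M)}))=\mathrm{ord}_2(\#E(\BF_{q_i})[2])\geq 1$, we have $t_E(M)=r$, giving $\mathrm{ord}_2\geq r-1-0$.

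The hard content is the matching upper bound, which I would prove by induction on $r$. For the base case $r=1$, apply Proposition~\ref{Integrality} with $m=q$: when $q\equiv 1\bmod 4$, the sum defining $\Psi_q'$ is empty, forcing $\Psi_q'=0$ and, via the same-parity assertion (valid because $\Delta_E<0$), forcing $\Psi_q$ even. Then $\langle q\rangle_{\chi_q}/c_f=\Psi_q-\langle q\rangle_{\chi^0}/c_f$, and the second term equals $-N_qL(E,1)/c_f$ by \eqref{ms1}; this has $\mathrm{ord}_2=\mathrm{ord}_2(N_q)+\mathrm{ord}_2(L(E,1)/c_\infty(E))+\mathrm{ord}_2(\nu_E)=1+(-1)+0=0$ by hypothesis (4) and $\nu_E$ odd. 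Hence $\langle q\rangle_{\chi_q}/c_f$ is odd. The case $q\equiv 3\bmod 4$ is symmetric, with roles of $\Psi_q$ and $\Psi_q'$ swapped.

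For the inductive step, apply Proposition~\ref{Integrality} to $m=q_1\cdots q_r$ to write
\[\langle m\rangle_{\chi_m}/c_f^\pm \;=\; 2^{r-1}\Psi^\pm_m \;-\;\langle m\rangle_{\chi^0}/c_f^\pm \;-\!\!\sum_{\substack{1<d<m\\ d\equiv m\,(4)}}\!\!\prod_{q\mid m/d}(a_q-2\chi_d(q))\,\langle d\rangle_{\chi_d}/c_f^\pm,\]
via \eqref{T'_{d,m}}. The key numerical facts driving the analysis are that $q\in\mathcal{S}$ forces $a_q$ even, that $a_q\equiv 0\bmod 4$ when $q\equiv 1\bmod 4$ and $a_q\equiv 2\bmod 4$ when $q\equiv 3\bmod 4$, and therefore $\mathrm{ord}_2(a_q-2\chi_d(q))\geq 1$ with equality exactly when $q\equiv 1\bmod 4$. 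Combined with the inductive hypothesis $\mathrm{ord}_2(\langle d\rangle_{\chi_d}/c_f^\pm)=r(d)-1$ for each proper divisor $d$ of the appropriate residue class, each term in the sum has $\mathrm{ord}_2\geq r-1$, strictly $>r-1$ unless $m^-\mid d$. For the ``stray'' $\langle m\rangle_{\chi^0}/c_f^\pm$, when $r(m^+)\geq 1$ and $r(m^-)\geq 1$ Lemma~\ref{ord_2 S'_m'} yields $\mathrm{ord}_2>r-1$; in the remaining ``pure'' cases (all primes in one residue class), a direct induction using \eqref{NqL(E,1)} pins down $\mathrm{ord}_2(\langle m\rangle_{\chi^0}/c_f^\pm)=r-1$ with an explicitly odd leading $2^{1-r}$-coefficient. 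The parity of $\Psi^\pm_m$ is then computed from the companion identity in Proposition~\ref{Integrality} (which must be controlled by the same-parity assertion using $\Delta_E<0$), and is shown to be odd, yielding the equality $\mathrm{ord}_2(\langle m\rangle_{\chi_m}/c_f^\pm)=r-1$. The main obstacle is the bookkeeping in this last step: multiple terms live at the same $2$-adic level $r-1$, and proving they do not cancel requires carefully tracking $a_q-2\chi_d(q)\bmod 4$ together with the induction hypothesis refined to specify the unit leading coefficient (not merely its valuation). Once $\langle m\rangle_{\chi_m}\neq 0$, Kolyvagin's theorem \cite{Kolyvagin} gives the finiteness of $E^{(M)}(\BQ)$ and $\Sha(E^{(M)})$.
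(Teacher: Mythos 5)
Your overall framework (reduce to $ord_2(\langle m\rangle_{\chi_m}/c^{\pm}_f)$, expand via Proposition \ref{Integrality} and \eqref{T'_{d,m}}, use $ord_2(a_q-2\chi_d(q))=1$ for $q\equiv 1$, $\geq 2$ for $q\equiv 3$, invoke Lemma \ref{ord_2 S'_m'} for the stray $\langle m\rangle_{\chi^0}$ term, and induct on $r$) is the same as the paper's, and your self-contained $r=1$ base case is fine. But the decisive step of your plan is wrong as stated. You claim that the companion identity plus the same-parity assertion shows $\Psi^{\pm}_m$ is \emph{odd}, and that this yields the equality. In fact the opposite is true for $r\geq 2$: in the companion identity (the one not containing $\langle m\rangle_{\chi_m}$) the parity constraint on $r(d^-)$ together with the parity of $r(M^-)$ forces $M^-\nmid d$ for every term, so by \eqref{eq315} every term there has valuation $>r-1$; hence the companion $\Psi$ is \emph{even}, and by the same-parity assertion (using $\Delta_E<0$) so is $\Psi^{\pm}_m$. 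This is exactly what the paper proves (already visible in its $r=2$ case, where $\Psi_{q_1q_2}$ and $\Psi'_{q_1q_2}$ are both even). With your claimed parity, $2^{r-1}\Psi^{\pm}_m$ and the middle sum would both sit at level $r-1$ and could cancel, which is why you are forced to flag ``proving they do not cancel'' via tracking leading units as the main obstacle --- but that obstacle is precisely the content of the theorem, and you do not resolve it; so as written the proposal has a genuine gap at its core.

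The paper's mechanism avoids the cancellation problem entirely and is worth contrasting with your plan: once $\Psi^{\pm}_m$ is known to be even, the term $2^{r-1}\Psi^{\pm}_m$ and the stray $\langle m\rangle_{\chi^0}$ (Lemma \ref{ord_2 S'_m'}) both have valuation $>r-1$, and the only terms at level exactly $r-1$ in \eqref{eq311} (resp.\ \eqref{eq314}) are the middle terms with $M^-\mid d$; there are exactly $2^{r(M^+)}-1$ of them, an \emph{odd} number, so their sum (each being $2^{r-1}$ times an odd integer, by the induction hypothesis with equality) stays at level exactly $r-1$, which forces $ord_2(\langle m\rangle_{\chi_m}/c^{\pm}_f)=r-1$ with no need to track unit leading coefficients. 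Two further points: (i) your displayed identity should not contain $\langle m\rangle_{\chi^0}$ when $m\equiv 3\bmod 4$ (it lies in the even-$r(d^-)$ sum, and dividing it by $ic^-_f$ is not meaningful); (ii) your treatment of the ``pure'' cases only pins down $\langle m\rangle_{\chi^0}$, whereas the induction hypothesis needs $ord_2(\langle d\rangle_{\chi_d}/c^{\pm}_f)=r(d)-1$ for pure divisors $d$ as well --- the paper supplies this by citing \cite[Theorem 1.1]{Cai} and \cite[Theorem 1.2]{Zhai3} (and \cite[Theorem 1.5]{Zhai1} at $r=1$), so you would either need those citations or a separate pure-case induction on the twisted sums themselves.
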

\begin{proof}
As usual, we write $M=\epsilon M^{+}M^{-}$, where all the prime factors of $M^{+}$ are congruent to $1$ modulo $4$, and all the prime factors of $M^{-}$ are congruent to $3$ modulo $4$. The theorem has been proved when $M=M^{+}$ (see \cite[Theorem 1.1]{Cai}), but for the remaining cases, the proof is very different, since the imaginary period gets involved. We now need to prove the case when $r(M^{+}) \geq 0$ and $r(M^{-}) \geq 1$. 

We first look at the case $r(M^{+})=0$ and $r(M^{-})=1$, i.e., $M=-q\equiv 1 \mod 4$. In view of Proposition \ref{m_chi} and note the $\Delta<0$, we have
$$
L(E^{(-q)},1)/c_\infty(E^{(-q)}) \equiv \langle q \rangle_{\chi_q}/i{c^-_f} \equiv \langle q \rangle_{\chi_1}/{c_f} \equiv N_q L(E,1)/c_f \equiv 1 \mod 2.
$$
Thus the theorem holds for this initial case. When $r(M)=2$, there are two cases left to be done, i.e., $r(M^{+})+2=r(M^{-})=2$ and $r(M^{+})=r(M^{-})=1$. 

We now consider the case $r(M^{+})=0$ and $r(M^{-})=2$, say $M=q_1 q_2$ with $q_1 \equiv q_2 \equiv 3 \mod 4$. By Proposition \ref{Integrality} we have that  
$$
(\langle q_1 q_2 \rangle_{\chi_1} + \langle q_1 q_2 \rangle_{\chi_{q_1 q_2}})/c_f = 2 \Psi_{q_1 q_2}; 
$$
$$
((a_{q_1} - 2 \chi_{q_2}({q_1})) \langle q_2 \rangle_{\chi_{q_2}} + (a_{q_2} - 2 \chi_{q_1}({q_2}))\langle q_1 \rangle_{\chi_{q_1}})/i{c^-_f} = 2\Psi'_{q_1 q_2}.
$$
Since 
$$a_{q_1} - 2 \chi_{q_2}({q_1}) \equiv a_{q_2} - 2 \chi_{q_1}({q_2}) \equiv 0 \mod 4,
$$ 
it follows that  $\Psi'_{q_1 q_2}$ must be even, and so is $\Psi_{q_1 q_2}$ by the last assertion of Lemma \ref{Integrality}. Not that 
$$
ord_2(\langle q_1 q_2 \rangle_{\chi_1}/c_f)=1
$$ 
by Corollary \ref{N_qL(E,1)m}, we must have
$$
ord_2(\langle q_1 q_2 \rangle_{\chi_{q_1 q_2}}/c_f)=1.
$$ 
This proves the case $r(M^{+})=0$ and $r(M^{-})=2$. 

We next consider the case $r(M^{+})=r(M^{-})=1$, say $M=-q_1 q_2$ with $q_1 \equiv 1 \mod 4$ and $q_2 \equiv 3 \mod 4$. By Proposition \ref{Integrality} we have that 
$$
(\langle q_1 q_2 \rangle_{\chi_1} + (a_{q_2} - 2 \chi_{q_1}({q_2}))\langle q_1 \rangle_{\chi_{q_1}})/c_f = 2 \Psi_{q_1 q_2}; 
$$
$$
 ((a_{q_1} - 2 \chi_{q_2}({q_1})) \langle q_2 \rangle_{\chi_{q_2}} + \langle q_1 q_2 \rangle_{\chi_{q_1 q_2}})/i{c^-_f} = 2\Psi'_{q_1 q_2}.
$$
Note that we have proved 
$$
ord_2(\langle q_1 \rangle_{\chi_{q_1}}/c_f )=ord_2(\langle q_2 \rangle_{\chi_{q_2}}/i{c^-_f})=0,
$$ 
then 
$$
ord_2((a_{q_2} - 2 \chi_{q_1}({q_2}))\langle q_1 \rangle_{\chi_{q_1}}/c_f) > 1,$$ 
and 
$$
ord_2(\langle q_1 q_2 \rangle_{\chi_1}/c_f) > 1
$$ 
by Lemma \ref{ord_2 S'_m'}, it follows that $\Psi_{q_1 q_2}$ must be even, and so is $\Psi'_{q_1 q_2}$. But\ $$
ord_2((a_{q_1} - 2 \chi_{q_2}({q_1})) \langle q_2 \rangle_{\chi_{q_2}}/i{c^-_f})=1,
$$ 
hence 
$$ord_2(\langle q_1 q_2 \rangle_{\chi_{q_1 q_2}}/i{c^-_f})=1.$$ 
This proves the case $r(M^{+})=r(M^{-})=1$.

We are now ready to prove the theorem by induction on $r(M)$. Now assume that $r(M) > 2$, and that the theorem is true for $r(M)<r$. We shall prove the theorem in two cases. 

When $r(M^{-})$ is even, by Proposition \ref{Integrality}, we have 
\begin{equation}\label{eq311}
\langle M^{+}M^{-} \rangle_{\chi_1} + \sum_{\substack{d \mid {M^{+}M^{-}}, 2 \mid r(d^{-}) \\ 1< d < {M^{+}M^{-}}}} \ \prod_{q \mid \frac{{M^{+}M^{-}}}{d}}(a_q - 2 \chi_d(q)) \cdot \langle d \rangle_{\chi_d} + \langle M^{+}M^{-} \rangle_{\chi_{M^{+}M^{-}}} = 2^{r(M)-1} \Psi_{M^{+}M^{-}} \cdot c_f;
\end{equation}
\begin{equation}\label{eq312}
\sum_{\substack{d \mid {M^{+}M^{-}}, 2 \nmid r(d^{-}) \\ 1< d < {M^{+}M^{-}}}} \ \prod_{q \mid \frac{{M^{+}M^{-}}}{d}}(a_q - 2 \chi_d(q)) \cdot \langle d \rangle_{\chi_d} = 2^{r(M)-1} \Psi'_{M^{+}M^{-}} \cdot i{c^-_f}.
\end{equation}
When $1< d < {M^{+}M^{-}}$, since $4 \mid (a_q - 2 \chi_d(q))$ if $q \mid M^{-}$, we have
\begin{equation}\label{eq315}
ord_2(\prod_{q \mid \frac{{M^{+}M^{-}}}{d}}(a_q - 2 \chi_d(q)) \cdot \langle d \rangle_{\chi_d} / c^{\pm}_f) 
\left\{
\begin{array}{ll}
= r(\frac{{M^{+}M^{-}}}{d})+r(d)-1=r(M)-1        & \hbox{if $M^{-} \mid d$;} \\
> r(\frac{{M^{+}M^{-}}}{d})+r(d)-1=r(M)-1        & \hbox{if $M^{-} \nmid d$,}
\end{array}
\right.
\end{equation}
by our assumption. If $2 \nmid r(d^{-})$, but $r(M^{-})$ is even, we must have $M^{-} \nmid d$. Combining with \eqref{eq312} and \eqref{eq315}, it follows that $\Psi'_{M^{+}M^{-}} $ must be even, and so is $\Psi_{M^{+}M^{-}}$, since $\Delta_E<0$. We now investigate the middle terms of the left-hand side of \eqref{eq311} divided by $c_f$, there are exactly $2^{r(M^{+})}-1$ terms which have $2$-adic valuation $r(M)-1$ and others have $2$-adic valuation greater than $r(M)-1$ in view of \eqref{eq315}. Thus, when $r(M^{+}) = 0$, the summation of all the middle terms divided by $c_f$ has $2$-adic valuation greater than $r(M)-1$. While 
$$
ord_2(\langle M^{-} \rangle_{\chi_1}/c_f)=r(M^{-})-1
$$ 
by Corollary \ref{N_qL(E,1)m}, we must have 
$$ord_2(\langle M^{-} \rangle_{\chi_{M^{-}}}/c_f)=r(M^{-})-1.$$
When $r(M^{+}) \neq 0$, the summation of all the middle terms divided by $c_f$ has $2$-adic valuation exactly $r(M)-1$. But $\Psi_{M^{+}M^{-}}$ is even, and 
$$
ord_2(\langle M^{+}M^{-} \rangle_{\chi_1}/c_f)>r(M)-1
$$ 
by Lemma \ref{ord_2 S'_m'}, it follows that we must have 
$$
ord_2(\langle M^{+}M^{-} \rangle_{\chi_{M^{+}M^{-}}}/c_f)=r(M)-1.
$$ 

When $r(M^{-})$ is odd, by Proposition \ref{Integrality}, we have 
\begin{equation}\label{eq313}
\langle M^{+}M^{-} \rangle_{\chi_1} + \sum_{\substack{d \mid {M^{+}M^{-}}, 2 \mid r(d^{-}) \\ 1< d < {M^{+}M^{-}}}} \ \prod_{q \mid \frac{{M^{+}M^{-}}}{d}}(a_q - 2 \chi_d(q)) \cdot \langle d \rangle_{\chi_d} = 2^{r(M)-1} \Psi_{M^{+}M^{-}} \cdot c_f;
\end{equation}
\begin{equation}\label{eq314}
\sum_{\substack{d \mid {M^{+}M^{-}}, 2 \nmid r(d^{-}) \\ 1< d < {M^{+}M^{-}}}} \ \prod_{q \mid \frac{{M^{+}M^{-}}}{d}}(a_q - 2 \chi_d(q)) \cdot \langle d \rangle_{\chi_d} + \langle M^{+}M^{-} \rangle_{\chi_{M^{+}M^{-}}} = 2^{r(M)-1} \Psi'_{M^{+}M^{-}} \cdot i{c^-_f}.
\end{equation}
If $2 \mid r(d^{-})$, but $r(M^{-})$ is odd, we must have $M^{-} \nmid d$. When $r(M^{+}) = 0$, in view of \eqref{eq315} and \eqref{eq313}, we conclude that $\Psi_{M^{-}}$ must be odd by Corollary \ref{N_qL(E,1)m}, and so is $\Psi'_{M^{-}}$, since $\Delta_E<0$. Similar to the case $2 \mid r(M^{-})$, same argument shows that the first summation in \eqref{eq314} divided by $i{c^-_f}$ has $2$-adic valuation greater than $r(M^{-})-1$. Thus, we must have 
$$
ord_2(\langle M^{-} \rangle_{\chi_{M^{-}}}/i{c^-_f})=r(M^{-})-1.
$$
When $r(M^{+}) \neq 0$, in view of \eqref{eq315} and \eqref{eq313}, we conclude that $\Psi_{M^{+}M^{-}}$ must be even by Lemma \ref{ord_2 S'_m'}, and so is $\Psi'_{M^{+}M^{-}}$. Still as before, the first summation in \eqref{eq314} divided by $i{c^-_f}$ has $2$-adic valuation exactly $r(M)-1$. Therefore, we have 
$$
ord_2(\langle M^{+}M^{-} \rangle_{\chi_{M^{+}M^{-}}}/i{c^-_f})=r(M)-1.
$$ 

This completes the proof for both cases in the induction argument. Since $\nu_E$ is odd, it follows that
$$
ord_2(L(E^{(M)},1)/c_\infty(E^{(M)})) = ord_2 (\langle M^{+}M^{-} \rangle_{\chi_{M^{+}M^{-}}}/c^{\pm}_f) = r-1.
$$
This completes the proof of the theorem combining the celebrated theorems of Gross--Zagier \cite{Gross} and Kolyvagin \cite{Kolyvagin}.
\end{proof}

\bigskip
\bigskip

\bigskip

\noindent Shuai Zhai, {\it Research Center for Mathematics and Interdisciplinary Sciences, Shandong University, Qingdao, Shandong, China.} 

\medskip

\noindent {\it E-mail:} zhai@sdu.edu.cn

\end{document}